\newtheorem{theorem}{Theorem}
\theoremstyle{plain}
\newtheorem{corollary}{Corollary}
\newtheorem{definition}{Definition}
\newtheorem{remark}{Remark}
\numberwithin{equation}{section}
\begin{document}
\title[Hadamard-type and Bullen-type inequalities]{Hadamard-type and
Bullen-type inequalities for Lipschitzian functions via fractional integrals}
\author{\.{I}mdat \.{I}\c{s}can}
\address{Department of Mathematics, Faculty of Sciences and Arts, Giresun
University, Giresun, Turkey}
\email{imdat.iscan@giresun.edu.tr}
\subjclass[2000]{ 26A51, 26A33, 26D10, 26D15. }
\keywords{Lipschitzian functions, Hadamard inequality, Bullen inequality,
Riemann--Liouville fractional integral.}

\begin{abstract}
In this paper, the author establishes some Hadamard-type and Bullen-type
inequalities for Lipschitzian functions via Riemann Liouville fractional
integral. These results have some relationships with [K.-L. Tseng, S.-R.
Hwang and K.-C. Hsu, Hadamard-type and Bullen-type inequalities for
Lipschitzian functions and their applications, Computers and Mathematics
with Applications (2012). doi:10.1016/j.camwa.2011.12.076].
\end{abstract}

\maketitle

\section{Introduction}

Following inequality is well known in the literature as Hermite-Hadamard's
inequality:

\begin{theorem}
Let $f:I\subseteq \mathbb{R\rightarrow R}$ be a convex function defined on
the interval $I$ of real numbers and $a,b\in I$ with $a<b$. The following
double inequality holds%
\begin{equation}
f\left( \frac{a+b}{2}\right) \leq \frac{1}{b-a}\dint\limits_{a}^{b}f(x)dx%
\leq \frac{f(a)+f(b)}{2}\text{.}  \label{1-1}
\end{equation}
\end{theorem}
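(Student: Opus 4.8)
The plan is to prove the left and right inequalities in \eqref{1-1} separately, in each case passing from the interval $[a,b]$ to $[0,1]$ via the affine substitution $x=ta+(1-t)b$, which produces the identity
\[
\frac{1}{b-a}\int_{a}^{b}f(x)\,dx=\int_{0}^{1}f\bigl(ta+(1-t)b\bigr)\,dt .
\]
Since a convex function on $I$ is continuous on the interior of $I$ and $a,b\in I$, the integrand is (Riemann) integrable, so all integrals below are meaningful; this is the only measure-theoretic point to record.

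For the right-hand inequality I would start directly from the defining inequality of convexity: for every $t\in[0,1]$,
\[
f\bigl(ta+(1-t)b\bigr)\leq tf(a)+(1-t)f(b).
\]
Integrating both sides over $t\in[0,1]$, using the identity above on the left and $\int_{0}^{1}t\,dt=\int_{0}^{1}(1-t)\,dt=\tfrac12$ on the right, gives $\frac{1}{b-a}\int_{a}^{b}f(x)\,dx\leq\frac{f(a)+f(b)}{2}$.

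For the left-hand inequality I would write the midpoint as the arithmetic mean of the two symmetric points $ta+(1-t)b$ and $(1-t)a+tb$ and apply convexity:
\[
f\!\left(\frac{a+b}{2}\right)\leq\frac12 f\bigl(ta+(1-t)b\bigr)+\frac12 f\bigl((1-t)a+tb\bigr).
\]
Integrating over $t\in[0,1]$, each of the two integrals on the right equals $\frac{1}{b-a}\int_{a}^{b}f(x)\,dx$ (the second after the further change $t\mapsto 1-t$), and the claim follows. An alternative route for this half is to invoke that a convex function admits a support line $\ell(x)=f\!\left(\frac{a+b}{2}\right)+c\bigl(x-\frac{a+b}{2}\bigr)$ at the interior point $\tfrac{a+b}{2}$ with $f\geq\ell$ on $[a,b]$, and then integrate $f\geq\ell$ over $[a,b]$; this is shorter but relies on the existence of supporting lines.

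I expect the only genuinely non-routine step to be the symmetrization $\frac{a+b}{2}=\frac12\bigl[(ta+(1-t)b)+((1-t)a+tb)\bigr]$ used in the left-hand inequality; once that observation is in place, both halves of \eqref{1-1} reduce to a single integration of the convexity definition, and the remaining work is just the bookkeeping of the substitution (which is orientation-preserving because $a<b$).
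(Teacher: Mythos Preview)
Your argument is correct and is one of the standard textbook proofs of the Hermite--Hadamard inequality. The paper itself does not prove this theorem at all: it is quoted in the introduction as a ``well known'' result and used only as motivation, so there is no proof in the paper to compare against. One trivial slip: the substitution $x=ta+(1-t)b$ sends $t=0$ to $b$ and $t=1$ to $a$, so it is orientation-\emph{reversing}, not orientation-preserving as you say in the final parenthetical; of course this does not affect the identity $\frac{1}{b-a}\int_{a}^{b}f(x)\,dx=\int_{0}^{1}f\bigl(ta+(1-t)b\bigr)\,dt$, which you wrote correctly.
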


In \cite{THD10}, Tseng et al. established the following Hadamard-type
inequality which refines the inequality (\ref{1-1}).

\begin{theorem}
Suppose that $f:\left[ a,b\right] \mathbb{\rightarrow R}$ is a convex
function on $\left[ a,b\right] $. Then we have the inequalities:%
\begin{eqnarray}
f\left( \frac{a+b}{2}\right) &\leq &\frac{1}{2}\left[ f\left( \frac{3a+b}{4}%
\right) +f\left( \frac{a+3b}{4}\right) \right]  \notag \\
&\leq &\frac{1}{b-a}\dint\limits_{a}^{b}f(x)dx  \label{1-1a} \\
&\leq &\frac{1}{2}\left[ f\left( \frac{a+b}{2}\right) +\frac{f(a)+f(b)}{2}%
\right] \leq \frac{f(a)+f(b)}{2}.  \notag
\end{eqnarray}
\end{theorem}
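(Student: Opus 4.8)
The plan is to derive the whole four-term chain from the classical Hermite--Hadamard inequality \eqref{1-1}, applied both to $f$ on all of $[a,b]$ and to $f$ on the two halves $\left[a,\tfrac{a+b}{2}\right]$ and $\left[\tfrac{a+b}{2},b\right]$, with the two outermost estimates coming straight from convexity.

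First I would dispose of the leftmost and rightmost inequalities, since each is immediate. Because $\tfrac{a+b}{2}=\tfrac12\cdot\tfrac{3a+b}{4}+\tfrac12\cdot\tfrac{a+3b}{4}$, convexity of $f$ gives
\[
f\!\left(\frac{a+b}{2}\right)\le \frac12\left[f\!\left(\frac{3a+b}{4}\right)+f\!\left(\frac{a+3b}{4}\right)\right],
\]
which is the first inequality. For the last one, the estimate $\tfrac12\!\left[f\!\left(\tfrac{a+b}{2}\right)+\tfrac{f(a)+f(b)}{2}\right]\le\tfrac{f(a)+f(b)}{2}$ is just a rearrangement of $f\!\left(\tfrac{a+b}{2}\right)\le\tfrac{f(a)+f(b)}{2}$, i.e.\ the right half of \eqref{1-1}.

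Next, for the two central inequalities I would apply \eqref{1-1} to $f$ on each half-interval, each of length $\tfrac{b-a}{2}$ (hence the normalizing factor $\tfrac{2}{b-a}$). On $\left[a,\tfrac{a+b}{2}\right]$, whose midpoint is $\tfrac{3a+b}{4}$, this yields
\[
f\!\left(\frac{3a+b}{4}\right)\le \frac{2}{b-a}\int_a^{\frac{a+b}{2}}f(x)\,dx\le \frac12\left[f(a)+f\!\left(\frac{a+b}{2}\right)\right],
\]
and on $\left[\tfrac{a+b}{2},b\right]$, whose midpoint is $\tfrac{a+3b}{4}$,
\[
f\!\left(\frac{a+3b}{4}\right)\le \frac{2}{b-a}\int_{\frac{a+b}{2}}^{b}f(x)\,dx\le \frac12\left[f\!\left(\frac{a+b}{2}\right)+f(b)\right].
\]
Adding these two chains, dividing by $2$, and using $\int_a^{(a+b)/2}+\int_{(a+b)/2}^{b}=\int_a^b$ produces exactly
\[
\frac12\left[f\!\left(\frac{3a+b}{4}\right)+f\!\left(\frac{a+3b}{4}\right)\right]\le \frac{1}{b-a}\int_a^b f(x)\,dx\le \frac12\left[f\!\left(\frac{a+b}{2}\right)+\frac{f(a)+f(b)}{2}\right],
\]
the two middle inequalities. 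Concatenating all four estimates gives \eqref{1-1a}.

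There is no genuine obstacle: the only care needed is the arithmetic of the quarter-points (verifying that the midpoints of the two halves are $\tfrac{3a+b}{4}$ and $\tfrac{a+3b}{4}$, and that their average is $\tfrac{a+b}{2}$) and the correct normalization on each half. If one prefers a self-contained argument that does not reinvoke \eqref{1-1} on the halves, the upper central bound also follows from the substitution $x=\tfrac{a+b}{2}+t$ together with the convexity estimate $f\!\left(\tfrac{a+b}{2}+t\right)+f\!\left(\tfrac{a+b}{2}-t\right)\le f(a)+f(b)$ for $0\le t\le\tfrac{b-a}{2}$, but the two-halves route is the cleanest.
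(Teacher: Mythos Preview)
Your argument is correct. Note, however, that the paper does not actually supply a proof of this theorem: it is quoted from \cite{THD10} as a known refinement of \eqref{1-1}, so there is no in-paper proof to compare against. Your derivation --- Hermite--Hadamard on each half $\left[a,\tfrac{a+b}{2}\right]$ and $\left[\tfrac{a+b}{2},b\right]$, then averaging, with the outer two inequalities coming directly from convexity --- is the standard route to \eqref{1-1a}.

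One small quibble with your closing aside: the substitution $x=\tfrac{a+b}{2}+t$ together with the estimate $f\!\left(\tfrac{a+b}{2}+t\right)+f\!\left(\tfrac{a+b}{2}-t\right)\le f(a)+f(b)$ integrates only to $\tfrac{1}{b-a}\int_a^b f(x)\,dx\le \tfrac{f(a)+f(b)}{2}$, which is the right half of \eqref{1-1}, not the sharper Bullen bound $\tfrac12\!\left[f\!\left(\tfrac{a+b}{2}\right)+\tfrac{f(a)+f(b)}{2}\right]$. To get Bullen's inequality by a pointwise symmetrization one would need the finer estimate on each half separately, which brings you back to the two-halves argument you already gave. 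This does not affect your main proof.
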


The third inequality in (\ref{1-1a}) is known in the literature as Bullen's
inequality.

In what follows we recall the following definition.

\begin{definition}
A function $f:I\subseteq \mathbb{R\rightarrow R}$ is called an $M$%
-Lipschitzian function on the interval $I$ of real numbers with $M\geq 0$,
if 
\begin{equation*}
\left\vert f(x)-f(y)\right\vert \leq M\left\vert x-y\right\vert
\end{equation*}%
for all $x,y\in I.$
\end{definition}

For some recent results connected with Hermite-Hadamard type integral
inequalities for Lipschitzian functions, see \cite{DCK00,HHT13,THH12,YT01}.

In \cite{THH12}, Tseng et al. established some Hadamard-type and Bullen-type
inequalities for Lipschitzian functions as follows

\begin{theorem}
\label{1.1}Let $I$ be an interval in $%
%TCIMACRO{\U{211d} }%
%BeginExpansion
\mathbb{R}
%EndExpansion
$, $a\leq A\leq B\leq b$ in $I$, $V=(1-\alpha )a+\alpha b$, $\alpha \in %
\left[ 0,1\right] $ and let $f:I\mathbb{\rightarrow R}$ be an $L$%
-Lipschitzian function with $L\geq 0.$ Then we have the inequality%
\begin{equation}
\left\vert \alpha f(A)+(1-\alpha )f(B)-\frac{1}{b-a}\dint%
\limits_{a}^{b}f(x)dx\right\vert \leq \frac{LV_{\alpha }(A,B)}{b-a},
\label{1-2}
\end{equation}%
where%
\begin{eqnarray*}
&&V_{\alpha }(A,B) \\
&=&\left\{ 
\begin{array}{c}
\left( A-a\right) ^{2}-\left( A-V\right) ^{2}+\left( B-V\right) ^{2}+\left(
b-B\right) ^{2}, \\ 
a\leq V\leq A\leq B\leq b, \\ 
\left( A-a\right) ^{2}+\left( V-A\right) ^{2}+\left( B-V\right) ^{2}+\left(
b-B\right) ^{2}, \\ 
a\leq A\leq V\leq B\leq b, \\ 
\left( A-a\right) ^{2}+\left( V-A\right) ^{2}+\left( b-B\right) ^{2}-\left(
V-B\right) ^{2}, \\ 
a\leq A\leq B\leq V\leq b%
\end{array}%
\right. .
\end{eqnarray*}
\end{theorem}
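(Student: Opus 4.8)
The plan is to turn the left-hand side of \eqref{1-2} into an exact integral identity and then apply the Lipschitz condition, after which the right-hand side drops out of three elementary integral evaluations. Since $V=(1-\alpha)a+\alpha b$ with $\alpha\in[0,1]$ we have $a\le V\le b$, $\alpha=\frac{V-a}{b-a}$ and $1-\alpha=\frac{b-V}{b-a}$, so $(V-a)f(A)=\int_a^V f(A)\,dx$ and $(b-V)f(B)=\int_V^b f(B)\,dx$. Multiplying the quantity to be estimated by $b-a$ and splitting $\int_a^b=\int_a^V+\int_V^b$, I would first establish the identity
\[
(b-a)\left[\alpha f(A)+(1-\alpha)f(B)-\frac{1}{b-a}\int_a^b f(x)\,dx\right]=\int_a^V\big(f(A)-f(x)\big)\,dx+\int_V^b\big(f(B)-f(x)\big)\,dx,
\]
which uses nothing about $f$ beyond integrability; no differentiability or absolute continuity of $f$ is needed.

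Next, taking absolute values on both sides, moving them inside the two integrals, and using $|f(A)-f(x)|\le L|A-x|$ and $|f(B)-f(x)|\le L|B-x|$, one obtains
\[
\left|\alpha f(A)+(1-\alpha)f(B)-\frac{1}{b-a}\int_a^b f(x)\,dx\right|\le\frac{L}{b-a}\left(\int_a^V|A-x|\,dx+\int_V^b|B-x|\,dx\right).
\]
It then remains to evaluate the bracket on the right. Since $A,B,V\in[a,b]$ and $A\le B$, exactly one of the configurations (i) $a\le V\le A\le B\le b$, (ii) $a\le A\le V\le B\le b$, (iii) $a\le A\le B\le V\le b$ occurs. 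In each case $|A-x|$ on $[a,V]$ and $|B-x|$ on $[V,b]$ are sign-definite apart from a single break point (at $A$ and/or $B$), where the integral is split when that point is interior; every resulting piece is a combination of terms of the form $\tfrac12(p-q)^2$, and assembling them reproduces the three branches of $V_\alpha(A,B)$. (In fact the bracket comes out equal to $\tfrac12 V_\alpha(A,B)$, so even the sharper bound $\frac{L\,V_\alpha(A,B)}{2(b-a)}$ holds, which a fortiori gives \eqref{1-2}.)

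The argument involves no analytic difficulty; the only thing requiring care — and hence the main obstacle — is the bookkeeping in the last step: correctly deciding whether $A$ (respectively $B$) lies in $[a,V]$ or in $[V,b]$, splitting the corresponding integral there when needed, and tracking signs so that each term comes out manifestly nonnegative, e.g. $(A-a)^2-(A-V)^2=(V-a)(2A-a-V)\ge0$ whenever $a\le V\le A$, which is exactly what makes the case-(i) expression for $V_\alpha$ nonnegative. Once the three pairs of elementary integrals are computed, \eqref{1-2} follows immediately.
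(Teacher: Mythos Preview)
Your proposal is correct and follows essentially the same route as the paper. The paper does not prove Theorem~\ref{1.1} directly but recovers it as the case $\alpha=1$ of Theorem~\ref{2.1}; the proof of Theorem~\ref{2.1} proceeds exactly as you describe: it rewrites the left side as $\frac{\alpha}{(b-a)^\alpha}\big[\int_a^V(f(x)-f(t))(t-a)^{\alpha-1}\,dt+\int_V^b(f(y)-f(t))(b-t)^{\alpha-1}\,dt\big]$ (which for $\alpha=1$ is precisely your identity), applies the Lipschitz bound, and then evaluates $\int_a^V|x-t|\,dt$ and $\int_V^b|y-t|\,dt$ in the three cases. Your observation that the computation actually yields $\tfrac12 V_\alpha(A,B)$, hence the sharper bound $\frac{L\,V_\alpha(A,B)}{2(b-a)}$, is also correct and is in fact what Theorem~\ref{2.1} gives when specialized to $\alpha=1$.
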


\begin{theorem}
\label{1.2}Let $I$ be an interval in $%
%TCIMACRO{\U{211d} }%
%BeginExpansion
\mathbb{R}
%EndExpansion
$, $a\leq A\leq B\leq C\leq b$ in $I$, $V_{1}=(1-\alpha )a+\alpha b$, $%
V_{2}=\gamma a+\left( \alpha +\beta \right) b$, $\alpha ,\beta ,\gamma \in %
\left[ 0,1\right] $, $\alpha +\beta +\gamma =1$, and let $f:I\mathbb{%
\rightarrow R}$ be an $L$-Lipschitzian function with $L\geq 0.$ Then we have
the inequality%
\begin{equation}
\left\vert \alpha f(A)+\beta f(B)+\gamma f\left( C\right) -\frac{1}{b-a}%
\dint\limits_{a}^{b}f(x)dx\right\vert \leq \frac{LV_{\alpha ,\beta ,\gamma
}(A,B,C)}{b-a},  \label{1-3}
\end{equation}%
where $V_{\alpha ,\beta ,\gamma }$ is defined as in \cite[Section 3]{THH12}.
\end{theorem}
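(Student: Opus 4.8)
The plan is to run the argument behind Theorem~\ref{1.1} with three interpolation points instead of two; the one preliminary observation that makes everything work is that the weights $\alpha ,\beta ,\gamma $ are precisely the relative lengths of the three subintervals into which $V_{1}$ and $V_{2}$ cut $[a,b]$. Indeed, since $\alpha +\beta +\gamma =1$,
\[
V_{1}-a=\alpha (b-a),\qquad V_{2}-V_{1}=\beta (b-a),\qquad b-V_{2}=\gamma (b-a),
\]
so that $a\le V_{1}\le V_{2}\le b$, and consequently $\alpha f(A)=\frac{1}{b-a}\int_{a}^{V_{1}}f(A)\,dx$, $\beta f(B)=\frac{1}{b-a}\int_{V_{1}}^{V_{2}}f(B)\,dx$ and $\gamma f(C)=\frac{1}{b-a}\int_{V_{2}}^{b}f(C)\,dx$. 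Splitting $\int_{a}^{b}f(x)\,dx$ over these same three subintervals and subtracting gives the identity
\[
\alpha f(A)+\beta f(B)+\gamma f(C)-\frac{1}{b-a}\int_{a}^{b}f(x)\,dx=\frac{1}{b-a}\left[ \int_{a}^{V_{1}}\left( f(A)-f(x)\right) dx+\int_{V_{1}}^{V_{2}}\left( f(B)-f(x)\right) dx+\int_{V_{2}}^{b}\left( f(C)-f(x)\right) dx\right] .
\]

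Next I would take absolute values, apply the triangle inequality for integrals to each of the three terms, and invoke the $L$-Lipschitz hypothesis in the form $\left\vert f(u)-f(x)\right\vert \le L\left\vert u-x\right\vert $ with $u=A,B,C$ respectively. This yields
\[
\left\vert \alpha f(A)+\beta f(B)+\gamma f(C)-\frac{1}{b-a}\int_{a}^{b}f(x)\,dx\right\vert \le \frac{L}{b-a}\left[ \int_{a}^{V_{1}}\left\vert A-x\right\vert dx+\int_{V_{1}}^{V_{2}}\left\vert B-x\right\vert dx+\int_{V_{2}}^{b}\left\vert C-x\right\vert dx\right] .
\]
Each of the three integrals on the right is an elementary integral of a distance-to-a-point over an interval, hence a piecewise quadratic in that point whose breakpoints record whether the point lies to the left of, inside, or to the right of the interval of integration --- exactly the kind of term that appears in $V_{\alpha }(A,B)$ in Theorem~\ref{1.1}. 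Running through the possible positions of $A$ with respect to $V_{1}$, of $B$ with respect to $[V_{1},V_{2}]$, and of $C$ with respect to $V_{2}$, and discarding the configurations that are incompatible with $a\le A\le B\le C\le b$ together with $a\le V_{1}\le V_{2}\le b$, one checks in each admissible case that the sum of the three integrals equals the matching branch of the piecewise expression $V_{\alpha ,\beta ,\gamma }(A,B,C)$ of \cite[Section~3]{THH12}. This is exactly the claimed inequality.

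I expect the only genuine difficulty to be organisational: listing the admissible configurations of $(A,B,C)$ against $(V_{1},V_{2})$ --- there are more than the three cases of Theorem~\ref{1.1}, since the middle integral over $[V_{1},V_{2}]$ alone already splits into three subcases --- and matching, branch by branch, the resulting sums of quadratics with the definition of $V_{\alpha ,\beta ,\gamma }$. Conceptually the proof introduces nothing new beyond Theorem~\ref{1.1}: once one notices that $(\alpha ,\beta ,\gamma )$ realises the normalised subinterval lengths $\bigl( V_{1}-a,\,V_{2}-V_{1},\,b-V_{2}\bigr) /(b-a)$, the weighted sum $\alpha f(A)+\beta f(B)+\gamma f(C)$ collapses into a single integral difference against $f$, and the Lipschitz bound finishes the job; the rest is bookkeeping.
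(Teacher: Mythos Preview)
Your proposal is correct and follows essentially the same route as the paper: the paper does not prove this cited result directly, but its proof of the fractional generalisation (Theorem~\ref{3.1}) specialises at $\alpha=1$ to exactly your argument --- split $[a,b]$ at $V_{1},V_{2}$, rewrite the weighted sum minus the integral as $\frac{1}{b-a}\bigl[\int_{a}^{V_{1}}(f(A)-f(t))\,dt+\int_{V_{1}}^{V_{2}}(f(B)-f(t))\,dt+\int_{V_{2}}^{b}(f(C)-f(t))\,dt\bigr]$, apply the Lipschitz bound, and then evaluate $\int|A-t|\,dt$, $\int|B-t|\,dt$, $\int|C-t|\,dt$ case by case according to the relative positions of $A,B,C$ against $V_{1},V_{2}$. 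Your identification of the weights $(\alpha,\beta,\gamma)$ with the normalised subinterval lengths is precisely the mechanism behind the paper's identity (\ref{3-1a}), and your anticipated case analysis matches the eight configurations enumerated there.
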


We give some necessary definitions and mathematical preliminaries of
fractional calculus theory which are used throughout this paper.

\begin{definition}
Let $f\in L\left[ a,b\right] $. The Riemann-Liouville integrals $%
J_{a^{+}}^{\alpha }f$ and $J_{b^{-}}^{\alpha }f$ of oder $\alpha >0$ with $%
a\geq 0$ are defined by

\begin{equation*}
J_{a^{+}}^{\alpha }f(x)=\frac{1}{\Gamma (\alpha )}\dint\limits_{a}^{x}\left(
x-t\right) ^{\alpha -1}f(t)dt,\ x>a
\end{equation*}

and

\begin{equation*}
J_{b^{-}}^{\alpha }f(x)=\frac{1}{\Gamma (\alpha )}\dint\limits_{x}^{b}\left(
t-x\right) ^{\alpha -1}f(t)dt,\ x<b
\end{equation*}%
respectively, where $\Gamma (\alpha )$ is the Gamma function defined by $%
\Gamma (\alpha )=$ $\dint\limits_{0}^{\infty }e^{-t}t^{\alpha -1}dt$ and $%
J_{a^{+}}^{0}f(x)=J_{b^{-}}^{0}f(x)=f(x).$
\end{definition}

In the case of $\alpha =1$, the fractional integral reduces to the classical
integral. For some recent results connected with fractional integral
inequalities, see \cite{D10,SO12,SSYB11,ZFW13}.

The aim of this paper is to establish some Hadamard-type and Bullen-type
inequalities for Lipschitzian functions via Riemann--Liouville fractional
integral.

\section{Hadamard-type inequalities for Lipschitzian functions via
fractional integrals}

Throughout this section, let $I$ be an interval in $%
%TCIMACRO{\U{211d} }%
%BeginExpansion
\mathbb{R}
%EndExpansion
$, $a\leq x\leq y\leq b$ in $I$ and let $f:I\rightarrow 
%TCIMACRO{\U{211d} }%
%BeginExpansion
\mathbb{R}
%EndExpansion
$ be an $M$-Lipschitzian function. In the next theorem, let $\lambda \in %
\left[ 0,1\right] $, $V=(1-\lambda )a+\lambda b$, and $V_{\alpha ,\lambda }$%
, $\alpha >0$, as follows:

(1) If $a\leq V\leq x\leq y\leq b$, then%
\begin{equation*}
V_{\alpha ,\lambda }(x,y)=\left( V-a\right) ^{\alpha }\left[ \frac{x-a}{%
\alpha }-\frac{V-a}{\alpha +1}\right] +\frac{2\left( b-y\right) ^{\alpha +1}%
}{\alpha \left( \alpha +1\right) }+\left( b-V\right) ^{\alpha }\left[ \frac{%
b-V}{\alpha +1}-\frac{b-y}{\alpha }\right]
\end{equation*}

(2) If $a\leq x\leq V\leq y\leq b$, then%
\begin{eqnarray*}
V_{\alpha ,\lambda }(x,y) &=&\frac{2\left( x-a\right) ^{\alpha +1}}{\alpha
\left( \alpha +1\right) }+\left( V-a\right) ^{\alpha }\left[ \frac{V-a}{%
\alpha +1}-\frac{x-a}{\alpha }\right] \\
&&+\frac{2\left( b-y\right) ^{\alpha +1}}{\alpha \left( \alpha +1\right) }%
+\left( b-V\right) ^{\alpha }\left[ \frac{b-V}{\alpha +1}-\frac{b-y}{\alpha }%
\right] .
\end{eqnarray*}

(3) If $a\leq x\leq y\leq V\leq b$, then%
\begin{equation*}
V_{\alpha ,\lambda }(x,y)=\frac{2\left( x-a\right) ^{\alpha +1}}{\alpha
\left( \alpha +1\right) }+\left( V-a\right) ^{\alpha }\left[ \frac{V-a}{%
\alpha +1}-\frac{x-a}{\alpha }\right] +\left( b-V\right) ^{\alpha }\left[ 
\frac{b-y}{\alpha }-\frac{b-V}{\alpha +1}\right] .
\end{equation*}

\begin{theorem}
\label{2.1}Let $x,y,\alpha ,\lambda ,V,V_{\alpha ,\lambda }$ and the
function $f$ be defined as above. Then we have the inequality for fractional
integrals%
\begin{equation}
\left\vert \lambda ^{\alpha }f(x)+(1-\lambda )^{\alpha }f(y)-\frac{\Gamma
(\alpha +1)}{\left( b-a\right) ^{\alpha }}\left[ J_{V-}^{\alpha
}f(a)+J_{V+}^{\alpha }f(b)\right] \right\vert \leq \frac{\alpha MV_{\alpha
,\lambda }(x,y)}{\left( b-a\right) ^{\alpha }}.  \label{2-1}
\end{equation}
\end{theorem}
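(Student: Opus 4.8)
The plan is to reduce the estimate~(\ref{2-1}) to a single integral identity and then apply the Lipschitz condition termwise. First I would write the two Riemann--Liouville integrals explicitly,
\[
J_{V^{-}}^{\alpha}f(a)=\frac{1}{\Gamma(\alpha)}\int_{a}^{V}(t-a)^{\alpha-1}f(t)\,dt,\qquad J_{V^{+}}^{\alpha}f(b)=\frac{1}{\Gamma(\alpha)}\int_{V}^{b}(b-t)^{\alpha-1}f(t)\,dt,
\]
so that, using $\Gamma(\alpha+1)=\alpha\Gamma(\alpha)$, the middle term of~(\ref{2-1}) becomes $\frac{\alpha}{(b-a)^{\alpha}}\bigl[\int_{a}^{V}(t-a)^{\alpha-1}f(t)\,dt+\int_{V}^{b}(b-t)^{\alpha-1}f(t)\,dt\bigr]$. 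I would then record the elementary facts that, since $V-a=\lambda(b-a)$ and $b-V=(1-\lambda)(b-a)$, one has $\lambda^{\alpha}=(V-a)^{\alpha}/(b-a)^{\alpha}$ and $(1-\lambda)^{\alpha}=(b-V)^{\alpha}/(b-a)^{\alpha}$, together with $\alpha\int_{a}^{V}(t-a)^{\alpha-1}\,dt=(V-a)^{\alpha}$ and $\alpha\int_{V}^{b}(b-t)^{\alpha-1}\,dt=(b-V)^{\alpha}$.

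Combining these observations, the quantity inside the absolute value in~(\ref{2-1}) is exactly
\[
\frac{\alpha}{(b-a)^{\alpha}}\left[\int_{a}^{V}(t-a)^{\alpha-1}\bigl(f(x)-f(t)\bigr)\,dt+\int_{V}^{b}(b-t)^{\alpha-1}\bigl(f(y)-f(t)\bigr)\,dt\right].
\]
Taking absolute values, applying the triangle inequality for integrals, and invoking the $M$-Lipschitz bound $|f(u)-f(t)|\le M|u-t|$, it follows that the left-hand side of~(\ref{2-1}) does not exceed
\[
\frac{\alpha M}{(b-a)^{\alpha}}\left[\int_{a}^{V}(t-a)^{\alpha-1}|x-t|\,dt+\int_{V}^{b}(b-t)^{\alpha-1}|y-t|\,dt\right].
\]

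Thus it remains only to show that the bracketed sum of integrals equals $V_{\alpha,\lambda}(x,y)$ in each of the three configurations, and this is the sole computation of the proof. I would handle it by splitting each integral at the point where the modulus changes sign. For example, when $a\le V\le x\le y\le b$: on $[a,V]$ we have $|x-t|=x-t$, so the substitution $u=t-a$ gives $(V-a)^{\alpha}\bigl[\frac{x-a}{\alpha}-\frac{V-a}{\alpha+1}\bigr]$; on $[V,b]$ we split at $t=y$ (legitimate since $V\le y$) and the substitution $s=b-t$ turns the two pieces into integrals of the form $\int s^{\alpha-1}(c\pm s)\,ds$, whose sum is $\frac{2(b-y)^{\alpha+1}}{\alpha(\alpha+1)}+(b-V)^{\alpha}\bigl[\frac{b-V}{\alpha+1}-\frac{b-y}{\alpha}\bigr]$; adding the two contributions reproduces $V_{\alpha,\lambda}(x,y)$ as stated in case~(1). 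The cases $a\le x\le V\le y\le b$ and $a\le x\le y\le V\le b$ are entirely analogous: in the first, both integrals are split (at $t=x$ and $t=y$, respectively), while in the second only the first integral is split (at $t=x$), since on $[V,b]$ one then has $|y-t|=t-y$ with no sign change. The main obstacle is purely organizational---tracking the several $(\,\cdot\,)^{\alpha}$ and $(\,\cdot\,)^{\alpha+1}$ terms correctly through the three cases---rather than conceptual; once the displayed identity is in place, the result follows by routine integration.
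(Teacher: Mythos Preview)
Your proof is correct and follows essentially the same route as the paper: you establish the identity expressing the left-hand side of~(\ref{2-1}) as $\frac{\alpha}{(b-a)^{\alpha}}$ times the sum of the two weighted difference integrals, apply the triangle inequality and the $M$-Lipschitz bound, and then evaluate $\int_{a}^{V}|x-t|(t-a)^{\alpha-1}\,dt$ and $\int_{V}^{b}|y-t|(b-t)^{\alpha-1}\,dt$ case by case. Your write-up is in fact slightly more explicit than the paper's in justifying the initial identity (via $\lambda^{\alpha}=(V-a)^{\alpha}/(b-a)^{\alpha}$ and $\alpha\int_{a}^{V}(t-a)^{\alpha-1}\,dt=(V-a)^{\alpha}$), but the argument is otherwise identical.
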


\begin{proof}
Using the hypothesis of $f$, we have the following inequality%
\begin{eqnarray}
&&\left\vert \lambda ^{\alpha }f(x)+(1-\lambda )^{\alpha }f(y)-\frac{\Gamma
(\alpha +1)}{\left( b-a\right) ^{\alpha }}\left[ J_{V-}^{\alpha
}f(a)+J_{V+}^{\alpha }f(b)\right] \right\vert  \notag \\
&=&\frac{\alpha }{\left( b-a\right) ^{\alpha }}\left\vert
\dint\limits_{a}^{V}\left[ f(x)-f(t)\right] \left( t-a\right) ^{\alpha
-1}dt+\dint\limits_{V}^{b}\left[ f(y)-f(t)\right] \left( b-t\right) ^{\alpha
-1}dt\right\vert  \notag \\
&\leq &\frac{\alpha }{\left( b-a\right) ^{\alpha }}\left[ \dint%
\limits_{a}^{V}\left\vert f(x)-f(t)\right\vert \left( t-a\right) ^{\alpha
-1}dt+\dint\limits_{V}^{b}\left\vert f(y)-f(t)\right\vert \left( b-t\right)
^{\alpha -1}dt\right]  \notag \\
&\leq &\frac{\alpha M}{\left( b-a\right) ^{\alpha }}\left[
\dint\limits_{a}^{V}\left\vert x-t\right\vert \left( t-a\right) ^{\alpha
-1}dt+\dint\limits_{V}^{b}\left\vert y-t\right\vert \left( b-t\right)
^{\alpha -1}dt\right] .  \label{2-1a}
\end{eqnarray}%
Now using simple calculations, we obtain the following identities $%
\int_{a}^{V}\left\vert x-t\right\vert \left( t-a\right) ^{\alpha -1}dt$ and $%
\int_{V}^{b}\left\vert y-t\right\vert \left( b-t\right) ^{\alpha -1}dt.$

(1) If $a\leq V\leq x\leq y\leq b$, then%
\begin{equation*}
\dint\limits_{a}^{V}\left\vert x-t\right\vert \left( t-a\right) ^{\alpha
-1}dt=\left( V-a\right) ^{\alpha }\left[ \frac{x-a}{\alpha }-\frac{V-a}{%
\alpha +1}\right]
\end{equation*}%
and%
\begin{equation*}
\dint\limits_{V}^{b}\left\vert y-t\right\vert \left( b-t\right) ^{\alpha
-1}dt=\frac{2\left( b-y\right) ^{\alpha +1}}{\alpha \left( \alpha +1\right) }%
+\left( b-V\right) ^{\alpha }\left[ \frac{b-V}{\alpha +1}-\frac{b-y}{\alpha }%
\right] .
\end{equation*}%
(2) If $a\leq x\leq V\leq y\leq b$, then%
\begin{equation*}
\dint\limits_{a}^{V}\left\vert x-t\right\vert \left( t-a\right) ^{\alpha
-1}dt=\frac{2\left( x-a\right) ^{\alpha +1}}{\alpha \left( \alpha +1\right) }%
+\left( V-a\right) ^{\alpha }\left[ \frac{V-a}{\alpha +1}-\frac{x-a}{\alpha }%
\right]
\end{equation*}%
and%
\begin{equation*}
\dint\limits_{V}^{b}\left\vert y-t\right\vert \left( b-t\right) ^{\alpha
-1}dt=\frac{2\left( b-y\right) ^{\alpha +1}}{\alpha \left( \alpha +1\right) }%
+\left( b-V\right) ^{\alpha }\left[ \frac{b-V}{\alpha +1}-\frac{b-y}{\alpha }%
\right] .
\end{equation*}%
(3) If $a\leq x\leq y\leq V\leq b$, then%
\begin{equation*}
\dint\limits_{a}^{V}\left\vert x-t\right\vert \left( t-a\right) ^{\alpha
-1}dt=\frac{2\left( x-a\right) ^{\alpha +1}}{\alpha \left( \alpha +1\right) }%
+\left( V-a\right) ^{\alpha }\left[ \frac{V-a}{\alpha +1}-\frac{x-a}{\alpha }%
\right]
\end{equation*}%
and%
\begin{equation*}
\dint\limits_{V}^{b}\left\vert y-t\right\vert \left( b-t\right) ^{\alpha
-1}dt=\left( b-V\right) ^{\alpha }\left[ \frac{b-y}{\alpha }-\frac{b-V}{%
\alpha +1}\right] .
\end{equation*}%
Using the inequality (\ref{2-1a}) and the above identities $%
\int_{a}^{V}\left\vert x-t\right\vert \left( t-a\right) ^{\alpha -1}dt$ and $%
\int_{V}^{b}\left\vert y-t\right\vert \left( b-t\right) ^{\alpha -1}dt$, we
derive the inequality (\ref{2-1}). This completes the proof.
\end{proof}

Under the assumptions of Theorem \ref{2.1}, we have the following
corollaries and remarks:

\begin{remark}
In Theorem \ref{2.1}, if we take $\alpha =1$, then the inequality (\ref{2-1}%
) reduces the inequality (\ref{1-2}) in Theorem \ref{1.1}.
\end{remark}

\begin{corollary}
\begin{enumerate}
\item In Theorem \ref{2.1}, let $\delta \in \left[ \frac{1}{2},1\right] $, $%
x=\delta a+(1-\delta )b$ and $y=(1-\delta )a+\delta b$. Then, we have the
inequality%
\begin{equation*}
\left\vert \lambda ^{\alpha }f(\delta a+(1-\delta )b)+(1-\lambda )^{\alpha
}f((1-\delta )a+\delta b)-\frac{\Gamma (\alpha +1)}{\left( b-a\right)
^{\alpha }}\left[ J_{V-}^{\alpha }f(a)+J_{V+}^{\alpha }f(b)\right]
\right\vert
\end{equation*}%
\begin{equation}
\leq \frac{ML\left( \alpha ,\lambda ,\delta \right) \left( b-a\right) }{%
\alpha +1}  \label{2-1b}
\end{equation}%
where%
\begin{eqnarray*}
&&L\left( \alpha ,\lambda ,\delta \right) \\
&=&\left\{ 
\begin{array}{c}
\begin{array}{c}
\lambda ^{\alpha }\left[ \left( 1-\delta \right) \left( 1+\alpha \right)
-\lambda \alpha \right] +2\left( 1-\delta \right) ^{\alpha +1}+\left(
1-\lambda \right) ^{\alpha }\left[ \left( 1-\lambda \right) \alpha -\left(
1-\delta \right) \left( 1+\alpha \right) \right] , \\ 
\lambda \leq 1-\delta%
\end{array}
\\ 
4\left( 1-\delta \right) ^{\alpha +1}+\lambda ^{\alpha }\left[ \lambda
\alpha -\left( 1-\delta \right) \left( 1+\alpha \right) \right] +\left(
1-\lambda \right) ^{\alpha }\left[ \left( 1-\lambda \right) \alpha -\left(
1-\delta \right) \left( 1+\alpha \right) \right] , \\ 
1-\delta \leq \lambda \leq \delta \\ 
2\left( 1-\delta \right) ^{\alpha +1}+\lambda ^{\alpha }\left[ \lambda
\alpha -\left( 1-\delta \right) \left( 1+\alpha \right) \right] +\left(
1-\lambda \right) ^{\alpha }\left[ \left( 1-\delta \right) \left( 1+\alpha
\right) -\left( 1-\lambda \right) \alpha \right] , \\ 
\delta \leq \lambda .%
\end{array}%
\right.
\end{eqnarray*}

\item In Theorem \ref{2.1}, if we take $x=y=V$, then we have the inequality%
\begin{eqnarray}
&&\left\vert \left[ \lambda ^{\alpha }+(1-\lambda )^{\alpha }\right] f(x)-%
\frac{\Gamma (\alpha +1)}{\left( b-a\right) ^{\alpha }}\left[ J_{V-}^{\alpha
}f(a)+J_{V+}^{\alpha }f(b)\right] \right\vert  \label{2-1c} \\
&\leq &M\frac{\left( x-a\right) ^{\alpha +1}+\left( b-x\right) ^{\alpha +1}}{%
\left( \alpha +1\right) \left( b-a\right) ^{\alpha }}.  \notag
\end{eqnarray}
\end{enumerate}
\end{corollary}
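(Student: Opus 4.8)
The plan is to obtain both parts of the corollary purely by specializing Theorem \ref{2.1}: no new estimate is needed, only a check that the chosen points are admissible, an identification of which of the three cases defining $V_{\alpha ,\lambda }$ applies, and a bookkeeping computation.

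For part (1) I put $x=\delta a+(1-\delta )b$ and $y=(1-\delta )a+\delta b$ with $\delta \in \left[ \frac{1}{2},1\right] $. First I check the standing hypothesis $a\leq x\leq y\leq b$: from $\delta \geq \frac{1}{2}$ we get $x-a=(1-\delta )(b-a)\geq 0$, $y-x=(2\delta -1)(b-a)\geq 0$ and $b-y=(1-\delta )(b-a)\geq 0$. Next I record the elementary identities $x-a=b-y=(1-\delta )(b-a)$, $b-x=y-a=\delta (b-a)$, $V-a=\lambda (b-a)$ and $b-V=(1-\lambda )(b-a)$, where $V=(1-\lambda )a+\lambda b$. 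The location of $V$ relative to $x$ and $y$ is governed entirely by $\lambda $: one has $V\leq x\iff \lambda \leq 1-\delta $, which is case (1) of Theorem \ref{2.1}; $x\leq V\leq y\iff 1-\delta \leq \lambda \leq \delta $, case (2); and $y\leq V\iff \delta \leq \lambda $, case (3). In each case I substitute these identities into the corresponding formula for $V_{\alpha ,\lambda }(x,y)$ stated just before Theorem \ref{2.1}, pull out the common factor $(b-a)^{\alpha +1}$, and put the $\frac{1}{\alpha }$- and $\frac{1}{\alpha +1}$-terms over the denominator $\alpha (\alpha +1)$. This yields, in all three cases, $V_{\alpha ,\lambda }(x,y)=\frac{(b-a)^{\alpha +1}}{\alpha (\alpha +1)}L(\alpha ,\lambda ,\delta )$ with $L$ the three-branch expression in the statement. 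Inserting this into (\ref{2-1}) and using $\frac{\alpha M}{(b-a)^{\alpha }}\cdot \frac{(b-a)^{\alpha +1}}{\alpha (\alpha +1)}=\frac{M(b-a)}{\alpha +1}$ gives (\ref{2-1b}).

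For part (2) I take $x=y=V$. Then $\lambda ^{\alpha }f(x)+(1-\lambda )^{\alpha }f(y)=\left[ \lambda ^{\alpha }+(1-\lambda )^{\alpha }\right] f(x)$, and since $x=y=V$ all three cases of the definition of $V_{\alpha ,\lambda }$ coincide, so I may use, say, case (2) with $x-a=V-a$ and $b-y=b-V$. In that formula the bracket $(V-a)^{\alpha }\left[ \frac{V-a}{\alpha +1}-\frac{x-a}{\alpha }\right] $ equals $-\frac{(x-a)^{\alpha +1}}{\alpha (\alpha +1)}$, which cuts $\frac{2(x-a)^{\alpha +1}}{\alpha (\alpha +1)}$ down to $\frac{(x-a)^{\alpha +1}}{\alpha (\alpha +1)}$; symmetrically the $b-x$ terms combine to $\frac{(b-x)^{\alpha +1}}{\alpha (\alpha +1)}$. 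Hence $V_{\alpha ,\lambda }(x,y)=\frac{(x-a)^{\alpha +1}+(b-x)^{\alpha +1}}{\alpha (\alpha +1)}$, and substituting into (\ref{2-1}) yields (\ref{2-1c}).

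The only point that needs any care — the \emph{main obstacle}, such as it is — is the case analysis in part (1): one must correctly match each inequality chain locating $V$ (expressed through $\delta $ and $\lambda $) with the right branch of the definition of $V_{\alpha ,\lambda }$, and then keep the algebra straight when clearing denominators so that the three branches of $L(\alpha ,\lambda ,\delta )$ come out exactly as displayed. Everything else is routine substitution.
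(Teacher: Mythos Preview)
Your proposal is correct and follows exactly the route the paper intends: the corollary is stated in the paper without a separate proof, as a direct specialization of Theorem \ref{2.1}, and your write-up simply makes that specialization explicit (checking admissibility, matching the three cases $V\leq x$, $x\leq V\leq y$, $y\leq V$ to the conditions $\lambda\leq 1-\delta$, $1-\delta\leq\lambda\leq\delta$, $\delta\leq\lambda$, and substituting the identities $x-a=b-y=(1-\delta)(b-a)$, $V-a=\lambda(b-a)$, $b-V=(1-\lambda)(b-a)$). The algebra you describe indeed produces the three branches of $L(\alpha,\lambda,\delta)$ and, for part (2), the simplification to $\frac{(x-a)^{\alpha+1}+(b-x)^{\alpha+1}}{\alpha(\alpha+1)}$.
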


\begin{corollary}
We have the following weighted Hadamard-type inequalities for Lipschitzian
functions via Rieamnn-liouville fractional integrals

\begin{enumerate}
\item[(1)] In the inequality (\ref{2-1b}), if we take $\delta =1$, then we
have%
\begin{eqnarray*}
&&\left\vert \lambda ^{\alpha }f(a)+(1-\lambda )^{\alpha }f((b)-\frac{\Gamma
(\alpha +1)}{\left( b-a\right) ^{\alpha }}\left[ J_{V-}^{\alpha
}f(a)+J_{V+}^{\alpha }f(b)\right] \right\vert \\
&\leq &\alpha M\left( b-a\right) \frac{\lambda ^{\alpha +1}+\left( 1-\lambda
\right) ^{\alpha +1}}{\alpha +1},
\end{eqnarray*}%
in this inequality, specially if we choose $\lambda =\frac{x-a}{b-a}$ for $%
x\in \left[ a,b\right] $, then 
\begin{eqnarray*}
&&\left\vert \frac{\left( x-a\right) ^{\alpha }f(a)+\left( b-x\right)
^{\alpha }f((b)}{\left( b-a\right) ^{\alpha }}-\frac{\Gamma (\alpha +1)}{%
\left( b-a\right) ^{\alpha }}\left[ J_{x-}^{\alpha }f(a)+J_{x+}^{\alpha }f(b)%
\right] \right\vert \\
&\leq &\alpha M\frac{\left( x-a\right) ^{\alpha +1}+\left( b-x\right)
^{\alpha +1}}{\left( \alpha +1\right) \left( b-a\right) ^{\alpha }},
\end{eqnarray*}

\item[(2)] In the inequality (\ref{2-1c}), if we take $x=\delta a+(1-\delta
)b,\ \delta \in \left[ 0,1\right] $, then 
\begin{eqnarray*}
&&\left\vert \left[ \lambda ^{\alpha }+(1-\lambda )^{\alpha }\right]
f(\delta a+(1-\delta )b)-\frac{\Gamma (\alpha +1)}{\left( b-a\right)
^{\alpha }}\left[ J_{V-}^{\alpha }f(a)+J_{V+}^{\alpha }f(b)\right]
\right\vert \\
&\leq &M\left( b-a\right) \frac{\delta ^{\alpha +1}+\left( 1-\delta \right)
^{\alpha +1}}{\left( \alpha +1\right) },
\end{eqnarray*}%
in this inequality, specially if we choose $\lambda =\frac{1}{2}$, then 
\begin{eqnarray*}
&&\left\vert f(\delta a+(1-\delta )b)-\frac{2^{\alpha -1}\Gamma (\alpha +1)}{%
\left( b-a\right) ^{\alpha }}\left[ J_{\left( \frac{a+b}{2}\right)
-}^{\alpha }f(a)+J_{\left( \frac{a+b}{2}\right) +}^{\alpha }f(b)\right]
\right\vert \\
&\leq &2^{\alpha -1}M\left( b-a\right) \frac{\delta ^{\alpha +1}+\left(
1-\delta \right) ^{\alpha +1}}{\left( \alpha +1\right) },
\end{eqnarray*}

\item[(3)] In the inequality (\ref{2-1c}), if we take $\lambda =\frac{1}{2}$
and $\delta =\frac{3}{4}$ then 
\begin{eqnarray*}
&&\left\vert \frac{1}{2}\left[ f\left( \frac{3a+b}{4}\right) +f\left( \frac{%
a+3b}{4}\right) \right] -\frac{2^{\alpha -1}\Gamma (\alpha +1)}{\left(
b-a\right) ^{\alpha }}\left[ J_{\left( \frac{a+b}{2}\right) -}^{\alpha
}f(a)+J_{\left( \frac{a+b}{2}\right) +}^{\alpha }f(b)\right] \right\vert \\
&\leq &M\left( b-a\right) \frac{1+2^{\alpha -1}(\alpha -1)}{2^{\alpha
+1}\left( \alpha +1\right) }
\end{eqnarray*}
\end{enumerate}
\end{corollary}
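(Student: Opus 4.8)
The plan is to obtain all three items purely by specializing parameters in \eqref{2-1b} and \eqref{2-1c} and doing elementary algebra; no analytic input beyond Theorem~\ref{2.1} and its first corollary is needed.

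For item~(1) I would set $\delta=1$ in \eqref{2-1b}, so that $x=\delta a+(1-\delta)b=a$ and $y=(1-\delta)a+\delta b=b$. The one nontrivial point is the value of $L(\alpha,\lambda,1)$: since $1-\delta=0$, for every $\lambda\in[0,1]$ we are in the middle branch $1-\delta\le\lambda\le\delta$, and every term of that branch carrying the factor $(1-\delta)$ vanishes, leaving $L(\alpha,\lambda,1)=\alpha\lambda^{\alpha+1}+\alpha(1-\lambda)^{\alpha+1}$; substituting this into \eqref{2-1b} gives the first displayed inequality. For the second one I would take $\lambda=\frac{x-a}{b-a}$ with $x\in[a,b]$: then $V=(1-\lambda)a+\lambda b=x$, $\lambda^{\alpha}=(x-a)^{\alpha}(b-a)^{-\alpha}$, $(1-\lambda)^{\alpha}=(b-x)^{\alpha}(b-a)^{-\alpha}$, and $\lambda^{\alpha+1}+(1-\lambda)^{\alpha+1}=\bigl[(x-a)^{\alpha+1}+(b-x)^{\alpha+1}\bigr](b-a)^{-(\alpha+1)}$, so collecting the powers of $(b-a)$ yields the claim.

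For item~(2) I would put $x=\delta a+(1-\delta)b$, $\delta\in[0,1]$, in \eqref{2-1c}. Then $x-a=(1-\delta)(b-a)$ and $b-x=\delta(b-a)$, so $(x-a)^{\alpha+1}+(b-x)^{\alpha+1}=(b-a)^{\alpha+1}\bigl[\delta^{\alpha+1}+(1-\delta)^{\alpha+1}\bigr]$ and the right-hand side of \eqref{2-1c} collapses to $M(b-a)\bigl[\delta^{\alpha+1}+(1-\delta)^{\alpha+1}\bigr]/(\alpha+1)$, which is the first displayed inequality. Taking $\lambda=\tfrac12$ then gives $\lambda^{\alpha}+(1-\lambda)^{\alpha}=2^{1-\alpha}$ and $V=\tfrac{a+b}{2}$; multiplying through by $2^{\alpha-1}$ to normalise the coefficient of $f$ produces the second displayed inequality. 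For item~(3) I would instead apply \eqref{2-1b} with $\lambda=\tfrac12$ and $\delta=\tfrac34$ (inequality \eqref{2-1c} carries no parameter $\delta$, and item~(2) alone would give a strictly larger constant, so the cancellation of the two $f$-values in \eqref{2-1b} is essential): then $x=\frac{3a+b}{4}$, $y=\frac{a+3b}{4}$, $V=\frac{a+b}{2}$, and $\lambda^{\alpha}=(1-\lambda)^{\alpha}=2^{-\alpha}$. Because $1-\delta=\tfrac14\le\tfrac12\le\tfrac34=\delta$, the middle branch gives $L(\alpha,\tfrac12,\tfrac34)=4\cdot 4^{-(\alpha+1)}+2\cdot 2^{-\alpha}\bigl[\tfrac{\alpha}{2}-\tfrac{1+\alpha}{4}\bigr]=2^{-2\alpha}+2^{-1-\alpha}(\alpha-1)$, and multiplying the whole of \eqref{2-1b} by $2^{\alpha-1}$ turns the left side into $\bigl|\tfrac12[f(\tfrac{3a+b}{4})+f(\tfrac{a+3b}{4})]-2^{\alpha-1}(b-a)^{-\alpha}\Gamma(\alpha+1)[J_{\left(\frac{a+b}{2}\right)-}^{\alpha}f(a)+J_{\left(\frac{a+b}{2}\right)+}^{\alpha}f(b)]\bigr|$ and the right side into $\frac{M(b-a)}{\alpha+1}\,2^{\alpha-1}\bigl(2^{-2\alpha}+2^{-1-\alpha}(\alpha-1)\bigr)=M(b-a)\frac{1+2^{\alpha-1}(\alpha-1)}{2^{\alpha+1}(\alpha+1)}$, using $2^{\alpha-1}2^{-2\alpha}=2^{-\alpha-1}$ and $2^{\alpha-1}2^{-1-\alpha}=\tfrac14$.

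The only genuine hazard throughout is bookkeeping: the degeneracy check for $L(\alpha,\lambda,\delta)$ at $\delta=1$ (verifying that the middle-branch formula is the one that applies for \emph{all} $\lambda\in[0,1]$ and that it reduces to $\alpha\lambda^{\alpha+1}+\alpha(1-\lambda)^{\alpha+1}$), and the power-of-two tracking in item~(3) that is needed to recognise the stated right-hand side.
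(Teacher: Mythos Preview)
Your proposal is correct and follows exactly the route the paper intends: the corollary is stated without proof, as an immediate specialization of \eqref{2-1b} and \eqref{2-1c}, and your derivation supplies precisely those computations. Your observation in item~(3) is right---the reference to \eqref{2-1c} in the statement is a typo, since \eqref{2-1c} has $x=y$ and no $\delta$; the result must come from \eqref{2-1b} with $\lambda=\tfrac12$, $\delta=\tfrac34$, exactly as you argue, and your evaluation of $L(\alpha,\tfrac12,\tfrac34)$ and the subsequent power-of-two bookkeeping are correct.
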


\section{Bullen -type inequalities for Lipschitzian functions via fractional
integrals}

Throughout this section, let $I$ be an interval in $%
%TCIMACRO{\U{211d} }%
%BeginExpansion
\mathbb{R}
%EndExpansion
$, $a\leq x\leq y\leq z\leq b$ in $I$ and $f:I\rightarrow 
%TCIMACRO{\U{211d} }%
%BeginExpansion
\mathbb{R}
%EndExpansion
$ be an $M$-lipschitzian function. In the next theorem, let $\lambda +\eta
+\mu =1$, $\lambda ,\eta ,\mu \in \left[ 0,1\right] $, $V_{1}=(1-\lambda
)a+\lambda b$, $V_{2}=\mu a+\left( \lambda +\eta \right) b$, and define $%
V_{\alpha ,\lambda ,\eta ,\mu }$, $\alpha >0$, as follows:

\begin{enumerate}
\item If $V_{1}\leq V_{2}\leq x\leq y\leq z$ or $V_{1}\leq x\leq V_{2}\leq
y\leq z$, then%
\begin{eqnarray*}
V_{\alpha ,\lambda ,\eta ,\mu }(x,y,z) &=&\left( V_{1}-a\right) ^{\alpha } 
\left[ \frac{x-a}{\alpha }-\frac{V_{1}-a}{\alpha +1}\right] +\left(
V_{2}-V_{1}\right) ^{\alpha }\left[ \frac{y-V_{2}}{\alpha }+\frac{V_{2}-V_{1}%
}{\alpha +1}\right] \\
&&+\frac{2\left( b-z\right) ^{\alpha +1}}{\alpha \left( \alpha +1\right) }%
+\left( b-V_{2}\right) ^{\alpha }\left[ \frac{b-V_{2}}{\alpha +1}-\frac{b-z}{%
\alpha }\right] .
\end{eqnarray*}

\item If $V_{1}\leq x\leq y\leq V_{2}\leq z$, then 
\begin{eqnarray*}
V_{\alpha ,\lambda ,\eta ,\mu }(x,y,z) &=&\left( V_{1}-a\right) ^{\alpha } 
\left[ \frac{x-a}{\alpha }-\frac{V_{1}-a}{\alpha +1}\right] +\frac{2\left(
V_{2}-y\right) ^{\alpha +1}}{\alpha \left( \alpha +1\right) }+\left(
V_{2}-V_{1}\right) ^{\alpha }\left[ \frac{V_{2}-V_{1}}{\alpha +1}-\frac{%
V_{2}-y}{\alpha }\right] \\
&&+\frac{2\left( b-z\right) ^{\alpha +1}}{\alpha \left( \alpha +1\right) }%
+\left( b-V_{2}\right) ^{\alpha }\left[ \frac{b-V_{2}}{\alpha +1}-\frac{b-z}{%
\alpha }\right] .
\end{eqnarray*}

\item If $V_{1}\leq x\leq y\leq z\leq V_{2}$, then 
\begin{eqnarray*}
V_{\alpha ,\lambda ,\eta ,\mu }(x,y,z) &=&\left( V_{1}-a\right) ^{\alpha } 
\left[ \frac{x-a}{\alpha }-\frac{V_{1}-a}{\alpha +1}\right] +\frac{2\left(
V_{2}-y\right) ^{\alpha +1}}{\alpha \left( \alpha +1\right) } \\
&&+\left( V_{2}-V_{1}\right) ^{\alpha }\left[ \frac{V_{2}-V_{1}}{\alpha +1}-%
\frac{V_{2}-y}{\alpha }\right] +\left( b-V_{2}\right) ^{\alpha }\left[ \frac{%
b-z}{\alpha }-\frac{b-V_{2}}{\alpha +1}\right] .
\end{eqnarray*}

\item If $x\leq V_{1}\leq V_{2}\leq y\leq z$, then 
\begin{eqnarray*}
V_{\alpha ,\lambda ,\eta ,\mu }(x,y,z) &=&\frac{2\left( x-a\right) ^{\alpha
+1}}{\alpha \left( \alpha +1\right) }+\left( V_{1}-a\right) ^{\alpha }\left[ 
\frac{V_{1}-a}{\alpha +1}-\frac{x-a}{\alpha }\right] +\left(
V_{2}-V_{1}\right) ^{\alpha }\left[ \frac{y-V_{2}}{\alpha }+\frac{V_{2}-V_{1}%
}{\alpha +1}\right] \\
&&+\frac{2\left( b-z\right) ^{\alpha +1}}{\alpha \left( \alpha +1\right) }%
+\left( b-V_{2}\right) ^{\alpha }\left[ \frac{b-V_{2}}{\alpha +1}-\frac{b-z}{%
\alpha }\right] .
\end{eqnarray*}

\item If $x\leq V_{1}\leq y\leq V_{2}\leq z$, then 
\begin{eqnarray*}
V_{\alpha ,\lambda ,\eta ,\mu }(x,y,z) &=&\frac{2\left( x-a\right) ^{\alpha
+1}}{\alpha \left( \alpha +1\right) }+\left( V_{1}-a\right) ^{\alpha }\left[ 
\frac{V_{1}-a}{\alpha +1}-\frac{x-a}{\alpha }\right] +\frac{2\left(
V_{2}-y\right) ^{\alpha +1}}{\alpha \left( \alpha +1\right) } \\
&&+\left( V_{2}-V_{1}\right) ^{\alpha }\left[ \frac{V_{2}-V_{1}}{\alpha +1}-%
\frac{V_{2}-y}{\alpha }\right] +\frac{2\left( b-z\right) ^{\alpha +1}}{%
\alpha \left( \alpha +1\right) }+\left( b-V_{2}\right) ^{\alpha }\left[ 
\frac{b-V_{2}}{\alpha +1}-\frac{b-z}{\alpha }\right] .
\end{eqnarray*}

\item If $x\leq V_{1}\leq y\leq z\leq V_{2}$, then%
\begin{eqnarray*}
V_{\alpha ,\lambda ,\eta ,\mu }(x,y,z) &=&\frac{2\left( x-a\right) ^{\alpha
+1}}{\alpha \left( \alpha +1\right) }+\left( V_{1}-a\right) ^{\alpha }\left[ 
\frac{V_{1}-a}{\alpha +1}-\frac{x-a}{\alpha }\right] +\frac{2\left(
V_{2}-y\right) ^{\alpha +1}}{\alpha \left( \alpha +1\right) } \\
&&+\left( V_{2}-V_{1}\right) ^{\alpha }\left[ \frac{V_{2}-V_{1}}{\alpha +1}-%
\frac{V_{2}-y}{\alpha }\right] +\left( b-V_{2}\right) ^{\alpha }\left[ \frac{%
b-z}{\alpha }-\frac{b-V_{2}}{\alpha +1}\right] .
\end{eqnarray*}

\item If $x\leq y\leq V_{1}\leq V_{2}\leq z$, then%
\begin{eqnarray*}
V_{\alpha ,\lambda ,\eta ,\mu }(x,y,z) &=&\frac{2\left( x-a\right) ^{\alpha
+1}}{\alpha \left( \alpha +1\right) }+\left( V_{1}-a\right) ^{\alpha }\left[ 
\frac{V_{1}-a}{\alpha +1}-\frac{x-a}{\alpha }\right] +\left(
V_{2}-V_{1}\right) ^{\alpha }\left[ \frac{V_{2}-V_{1}}{\alpha +1}-\frac{%
V_{2}-y}{\alpha }\right] \\
&&+\frac{2\left( b-z\right) ^{\alpha +1}}{\alpha \left( \alpha +1\right) }%
+\left( b-V_{2}\right) ^{\alpha }\left[ \frac{b-V_{2}}{\alpha +1}-\frac{b-z}{%
\alpha }\right] .
\end{eqnarray*}

\item If $x\leq y\leq V_{1}\leq z\leq V_{2}$ or $x\leq y\leq z\leq V_{1}\leq
V_{2}$, then%
\begin{eqnarray*}
V_{\alpha ,\lambda ,\eta ,\mu }(x,y,z) &=&\frac{2\left( x-a\right) ^{\alpha
+1}}{\alpha \left( \alpha +1\right) }+\left( V_{1}-a\right) ^{\alpha }\left[ 
\frac{V_{1}-a}{\alpha +1}-\frac{x-a}{\alpha }\right] \\
&&+\left( V_{2}-V_{1}\right) ^{\alpha }\left[ \frac{V_{2}-V_{1}}{\alpha +1}-%
\frac{V_{2}-y}{\alpha }\right] +\left( b-V_{2}\right) ^{\alpha }\left[ \frac{%
b-z}{\alpha }-\frac{b-V_{2}}{\alpha +1}\right] .
\end{eqnarray*}%
\bigskip
\end{enumerate}

\begin{theorem}
\label{3.1}Let $x,y,z,\lambda ,\eta ,\mu ,V_{1},V_{2},V_{\alpha ,\lambda
,\eta ,\mu }$ and the function $f$ be defined as above. Then we have the
inequality%
\begin{equation*}
\left\vert \lambda ^{\alpha }f(x)+\eta ^{\alpha }f(y)+\mu ^{\alpha }f(z)-%
\frac{\Gamma (\alpha +1)}{\left( b-a\right) ^{\alpha }}\left[
J_{V_{1}-}^{\alpha }f(a)+J_{V_{1}+}^{\alpha }f(V_{2})+J_{V_{2}+}^{\alpha
}f(b)\right] \right\vert 
\end{equation*}%
\begin{equation}
\leq \frac{\alpha MV_{\alpha ,\lambda ,\eta ,\mu }(x,y,z)}{\left( b-a\right)
^{\alpha }}  \label{3-1}
\end{equation}
\end{theorem}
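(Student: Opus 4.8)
The plan is to run the same three-step argument that established Theorem~\ref{2.1}, now with three weights and three subintervals instead of two. In the first step I convert the left-hand side of (\ref{3-1}) into the absolute value of a single sum of three integrals. Since $\lambda+\eta+\mu=1$ with $\lambda,\eta,\mu\in[0,1]$, the nodes obey $a\le V_1\le V_2\le b$ with $V_1-a=\lambda(b-a)$, $V_2-V_1=\eta(b-a)$ and $b-V_2=\mu(b-a)$; hence $\int_a^{V_1}(t-a)^{\alpha-1}\,dt=\lambda^\alpha(b-a)^\alpha/\alpha$, $\int_{V_1}^{V_2}(V_2-t)^{\alpha-1}\,dt=\eta^\alpha(b-a)^\alpha/\alpha$ and $\int_{V_2}^b(b-t)^{\alpha-1}\,dt=\mu^\alpha(b-a)^\alpha/\alpha$. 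Multiplying these by $f(x),f(y),f(z)$ respectively, using $\Gamma(\alpha+1)/((b-a)^\alpha\Gamma(\alpha))=\alpha/(b-a)^\alpha$, and invoking the definition of the Riemann--Liouville integrals, one obtains
\[
\lambda^\alpha f(x)+\eta^\alpha f(y)+\mu^\alpha f(z)-\frac{\Gamma(\alpha+1)}{(b-a)^\alpha}\Big[J_{V_1-}^\alpha f(a)+J_{V_1+}^\alpha f(V_2)+J_{V_2+}^\alpha f(b)\Big]
\]
\[
=\frac{\alpha}{(b-a)^\alpha}\Big\{\int_a^{V_1}[f(x)-f(t)](t-a)^{\alpha-1}\,dt+\int_{V_1}^{V_2}[f(y)-f(t)](V_2-t)^{\alpha-1}\,dt+\int_{V_2}^b[f(z)-f(t)](b-t)^{\alpha-1}\,dt\Big\}.
\]

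In the second step I apply the triangle inequality and then the $M$-Lipschitz bound $|f(u)-f(t)|\le M|u-t|$ inside each integral, which reduces the statement to the identity
\[
\int_a^{V_1}|x-t|(t-a)^{\alpha-1}\,dt+\int_{V_1}^{V_2}|y-t|(V_2-t)^{\alpha-1}\,dt+\int_{V_2}^b|z-t|(b-t)^{\alpha-1}\,dt=V_{\alpha,\lambda,\eta,\mu}(x,y,z)
\]
in each of the eight listed positional cases. The third step evaluates these three elementary integrals: after the substitution $u=t-a$ (resp.\ $u=V_2-t$, $u=b-t$) each becomes $\int|c-u|\,u^{\alpha-1}\,du$ over a subinterval, which the power rule computes directly once one splits the range at $u=c$ whenever the node $c$ (one of $x$, $y$, $z$) lies strictly inside. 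If the node lies outside the integration interval the integrand keeps one sign and one gets a term of the form $(\text{subinterval length})^\alpha$ times a signed combination of $(\text{node-to-endpoint distance})/\alpha$ and $(\text{subinterval length})/(\alpha+1)$, the signs being fixed by which side of the subinterval the node is on; if it lies inside, the split adds the summand $2(\text{node-to-endpoint distance})^{\alpha+1}/(\alpha(\alpha+1))$. These are precisely the pieces appearing in the definition of $V_{\alpha,\lambda,\eta,\mu}$, and running them through the eight orderings of $\{x,y,z\}$ against $\{V_1,V_2\}$ reproduces the eight formulas term by term (the two sub-cases in (1) and in (8) give one and the same closed form, since the node whose position varies between them — $x$ in (1), $z$ in (8) — occurs only in the integral on which it already lies entirely to one side). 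Combining the three steps yields (\ref{3-1}).

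The genuinely new feature relative to Theorem~\ref{2.1} is purely organizational: there are now three integrals and two interior breakpoints $V_1\le V_2$, so the case list grows to eight orderings. I expect the main obstacle to be exactly this bookkeeping — enumerating the orderings without omission and, for each, deciding in which of the three integrals the integrand's absolute value changes sign so that the split is inserted at the right place, which is where sign errors most easily creep in. The analytic content is a single one-variable antiderivative computation repeated, entirely parallel to the one already carried out in the proof of Theorem~\ref{2.1}.
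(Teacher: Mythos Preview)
Your proposal is correct and follows essentially the same approach as the paper: rewrite the left-hand side as $\alpha/(b-a)^\alpha$ times the sum of three integrals, apply the triangle inequality and the $M$-Lipschitz bound, then evaluate $\int_a^{V_1}|x-t|(t-a)^{\alpha-1}dt$, $\int_{V_1}^{V_2}|y-t|(V_2-t)^{\alpha-1}dt$, $\int_{V_2}^b|z-t|(b-t)^{\alpha-1}dt$ case by case. Your additional remarks (the explicit computation of the normalizing constants $V_1-a=\lambda(b-a)$ etc., and the explanation of why the two sub-cases in (1) and in (8) collapse) are accurate and make the argument slightly more explicit than the paper's write-up.
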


\begin{proof}
Using the hypothesis of $f$, we have the inequality%
\begin{equation*}
\left\vert \lambda ^{\alpha }f(x)+\eta ^{\alpha }f(y)+\mu ^{\alpha }f(z)-%
\frac{\Gamma (\alpha +1)}{\left( b-a\right) ^{\alpha }}\left[
J_{V_{1}-}^{\alpha }f(a)+J_{V_{1}+}^{\alpha }f(V_{2})+J_{V_{2}+}^{\alpha
}f(b)\right] \right\vert
\end{equation*}%
\begin{equation*}
=\frac{\alpha }{\left( b-a\right) ^{\alpha }}\left\vert
\dint\limits_{a}^{V_{1}}\left[ f(x)-f(t)\right] \left( t-a\right) ^{\alpha
-1}dt+\dint\limits_{V_{1}}^{V_{2}}\left[ f(y)-f(t)\right] \left(
V_{2}-t\right) ^{\alpha -1}dt+\dint\limits_{V_{2}}^{b}\left[ f(z)-f(t)\right]
\left( b-t\right) ^{\alpha -1}dt\right\vert
\end{equation*}%
\begin{equation*}
\leq \frac{\alpha }{\left( b-a\right) ^{\alpha }}\left[ \dint%
\limits_{a}^{V_{1}}\left\vert f(x)-f(t)\right\vert \left( t-a\right)
^{\alpha -1}dt+\dint\limits_{V_{1}}^{V_{2}}\left\vert f(y)-f(t)\right\vert
\left( V_{2}-t\right) ^{\alpha -1}dt+\dint\limits_{V_{2}}^{b}\left\vert
f(z)-f(t)\right\vert \left( b-t\right) ^{\alpha -1}dt\right]
\end{equation*}%
\begin{equation}
\leq \frac{\alpha M}{\left( b-a\right) ^{\alpha }}\left[ \dint%
\limits_{a}^{V_{1}}\left\vert x-t\right\vert \left( t-a\right) ^{\alpha
-1}dt+\dint\limits_{V_{1}}^{V_{2}}\left\vert y-t\right\vert \left(
V_{2}-t\right) ^{\alpha -1}dt+\dint\limits_{V_{2}}^{b}\left\vert
z-t\right\vert \left( b-t\right) ^{\alpha -1}dt\right] .  \label{3-1a}
\end{equation}%
Now, using simple calculations, we obtain the following identities $%
\int_{a}^{V_{1}}\left\vert x-t\right\vert \left( t-a\right) ^{\alpha -1}dt$, 
$\int_{V_{1}}^{V_{2}}\left\vert y-t\right\vert \left( V_{2}-t\right)
^{\alpha -1}dt$ and $\int_{V_{2}}^{b}\left\vert z-t\right\vert \left(
b-t\right) ^{\alpha -1}dt.$

\begin{enumerate}
\item If $V_{1}\leq V_{2}\leq x\leq y\leq z$ or $V_{1}\leq x\leq V_{2}\leq
y\leq z$, then we have%
\begin{eqnarray*}
\dint\limits_{a}^{V_{1}}\left\vert x-t\right\vert \left( t-a\right) ^{\alpha
-1}dt &=&\left( V_{1}-a\right) ^{\alpha }\left[ \frac{x-a}{\alpha }-\frac{%
V_{1}-a}{\alpha +1}\right] , \\
\dint\limits_{V_{1}}^{V_{2}}\left\vert y-t\right\vert \left( V_{2}-t\right)
^{\alpha -1}dt &=&\left( V_{2}-V_{1}\right) ^{\alpha }\left[ \frac{y-V_{2}}{%
\alpha }+\frac{V_{2}-V_{1}}{\alpha +1}\right]
\end{eqnarray*}%
and%
\begin{equation*}
\dint\limits_{V_{2}}^{b}\left\vert z-t\right\vert \left( b-t\right) ^{\alpha
-1}dt=\frac{2\left( b-z\right) ^{\alpha +1}}{\alpha \left( \alpha +1\right) }%
+\left( b-V_{2}\right) ^{\alpha }\left[ \frac{b-V_{2}}{\alpha +1}-\frac{b-z}{%
\alpha }\right] .
\end{equation*}

\item If $V_{1}\leq x\leq y\leq V_{2}\leq z$, then we have%
\begin{eqnarray*}
\dint\limits_{a}^{V_{1}}\left\vert x-t\right\vert \left( t-a\right) ^{\alpha
-1}dt &=&\left( V_{1}-a\right) ^{\alpha }\left[ \frac{x-a}{\alpha }-\frac{%
V_{1}-a}{\alpha +1}\right] , \\
\dint\limits_{V_{1}}^{V_{2}}\left\vert y-t\right\vert \left( V_{2}-t\right)
^{\alpha -1}dt &=&\frac{2\left( V_{2}-y\right) ^{\alpha +1}}{\alpha \left(
\alpha +1\right) }+\left( V_{2}-V_{1}\right) ^{\alpha }\left[ \frac{%
V_{2}-V_{1}}{\alpha +1}-\frac{V_{2}-y}{\alpha }\right]
\end{eqnarray*}%
and%
\begin{equation*}
\dint\limits_{V_{2}}^{b}\left\vert z-t\right\vert \left( b-t\right) ^{\alpha
-1}dt=\frac{2\left( b-z\right) ^{\alpha +1}}{\alpha \left( \alpha +1\right) }%
+\left( b-V_{2}\right) ^{\alpha }\left[ \frac{b-V_{2}}{\alpha +1}-\frac{b-z}{%
\alpha }\right] .
\end{equation*}

\item If $V_{1}\leq x\leq y\leq z\leq V_{2}$, then we have%
\begin{eqnarray*}
\dint\limits_{a}^{V_{1}}\left\vert x-t\right\vert \left( t-a\right) ^{\alpha
-1}dt &=&\left( V_{1}-a\right) ^{\alpha }\left[ \frac{x-a}{\alpha }-\frac{%
V_{1}-a}{\alpha +1}\right] , \\
\dint\limits_{V_{1}}^{V_{2}}\left\vert y-t\right\vert \left( V_{2}-t\right)
^{\alpha -1}dt &=&\frac{2\left( V_{2}-y\right) ^{\alpha +1}}{\alpha \left(
\alpha +1\right) }+\left( V_{2}-V_{1}\right) ^{\alpha }\left[ \frac{%
V_{2}-V_{1}}{\alpha +1}-\frac{V_{2}-y}{\alpha }\right]
\end{eqnarray*}%
and%
\begin{equation*}
\dint\limits_{V_{2}}^{b}\left\vert z-t\right\vert \left( b-t\right) ^{\alpha
-1}dt=\left( b-V_{2}\right) ^{\alpha }\left[ \frac{b-z}{\alpha }-\frac{%
b-V_{2}}{\alpha +1}\right] .
\end{equation*}

\item If $x\leq V_{1}\leq V_{2}\leq y\leq z$, then we have%
\begin{eqnarray*}
\dint\limits_{a}^{V_{1}}\left\vert x-t\right\vert \left( t-a\right) ^{\alpha
-1}dt &=&\frac{2\left( x-a\right) ^{\alpha +1}}{\alpha \left( \alpha
+1\right) }+\left( V_{1}-a\right) ^{\alpha }\left[ \frac{V_{1}-a}{\alpha +1}-%
\frac{x-a}{\alpha }\right] , \\
\dint\limits_{V_{1}}^{V_{2}}\left\vert y-t\right\vert \left( V_{2}-t\right)
^{\alpha -1}dt &=&\left( V_{2}-V_{1}\right) ^{\alpha }\left[ \frac{y-V_{2}}{%
\alpha }+\frac{V_{2}-V_{1}}{\alpha +1}\right] ,
\end{eqnarray*}%
and%
\begin{equation*}
\dint\limits_{V_{2}}^{b}\left\vert z-t\right\vert \left( b-t\right) ^{\alpha
-1}dt=\frac{2\left( b-z\right) ^{\alpha +1}}{\alpha \left( \alpha +1\right) }%
+\left( b-V_{2}\right) ^{\alpha }\left[ \frac{b-V_{2}}{\alpha +1}-\frac{b-z}{%
\alpha }\right] .
\end{equation*}

\item If $x\leq V_{1}\leq y\leq V_{2}\leq z$, then we have%
\begin{eqnarray*}
\dint\limits_{a}^{V_{1}}\left\vert x-t\right\vert \left( t-a\right) ^{\alpha
-1}dt &=&\frac{2\left( x-a\right) ^{\alpha +1}}{\alpha \left( \alpha
+1\right) }+\left( V_{1}-a\right) ^{\alpha }\left[ \frac{V_{1}-a}{\alpha +1}-%
\frac{x-a}{\alpha }\right] , \\
\dint\limits_{V_{1}}^{V_{2}}\left\vert y-t\right\vert \left( V_{2}-t\right)
^{\alpha -1}dt &=&\frac{2\left( V_{2}-y\right) ^{\alpha +1}}{\alpha \left(
\alpha +1\right) }+\left( V_{2}-V_{1}\right) ^{\alpha }\left[ \frac{%
V_{2}-V_{1}}{\alpha +1}-\frac{V_{2}-y}{\alpha }\right]
\end{eqnarray*}%
and%
\begin{equation*}
\dint\limits_{V_{2}}^{b}\left\vert z-t\right\vert \left( b-t\right) ^{\alpha
-1}dt=\frac{2\left( b-z\right) ^{\alpha +1}}{\alpha \left( \alpha +1\right) }%
+\left( b-V_{2}\right) ^{\alpha }\left[ \frac{b-V_{2}}{\alpha +1}-\frac{b-z}{%
\alpha }\right] .
\end{equation*}

\item If $x\leq V_{1}\leq y\leq z\leq V_{2}$, then we have%
\begin{eqnarray*}
\dint\limits_{a}^{V_{1}}\left\vert x-t\right\vert \left( t-a\right) ^{\alpha
-1}dt &=&\frac{2\left( x-a\right) ^{\alpha +1}}{\alpha \left( \alpha
+1\right) }+\left( V_{1}-a\right) ^{\alpha }\left[ \frac{V_{1}-a}{\alpha +1}-%
\frac{x-a}{\alpha }\right] , \\
\dint\limits_{V_{1}}^{V_{2}}\left\vert y-t\right\vert \left( V_{2}-t\right)
^{\alpha -1}dt &=&\frac{2\left( V_{2}-y\right) ^{\alpha +1}}{\alpha \left(
\alpha +1\right) }+\left( V_{2}-V_{1}\right) ^{\alpha }\left[ \frac{%
V_{2}-V_{1}}{\alpha +1}-\frac{V_{2}-y}{\alpha }\right]
\end{eqnarray*}%
and%
\begin{equation*}
\dint\limits_{V_{2}}^{b}\left\vert z-t\right\vert \left( b-t\right) ^{\alpha
-1}dt=\left( b-V_{2}\right) ^{\alpha }\left[ \frac{b-z}{\alpha }-\frac{%
b-V_{2}}{\alpha +1}\right] .
\end{equation*}

\item If $x\leq y\leq V_{1}\leq V_{2}\leq z$, then we have%
\begin{eqnarray*}
\dint\limits_{a}^{V_{1}}\left\vert x-t\right\vert \left( t-a\right) ^{\alpha
-1}dt &=&\frac{2\left( x-a\right) ^{\alpha +1}}{\alpha \left( \alpha
+1\right) }+\left( V_{1}-a\right) ^{\alpha }\left[ \frac{V_{1}-a}{\alpha +1}-%
\frac{x-a}{\alpha }\right] , \\
\dint\limits_{V_{1}}^{V_{2}}\left\vert y-t\right\vert \left( V_{2}-t\right)
^{\alpha -1}dt &=&\left( V_{2}-V_{1}\right) ^{\alpha }\left[ \frac{%
V_{2}-V_{1}}{\alpha +1}-\frac{V_{2}-y}{\alpha }\right] ,
\end{eqnarray*}%
and%
\begin{equation*}
\dint\limits_{V_{2}}^{b}\left\vert z-t\right\vert \left( b-t\right) ^{\alpha
-1}dt=\frac{2\left( b-z\right) ^{\alpha +1}}{\alpha \left( \alpha +1\right) }%
+\left( b-V_{2}\right) ^{\alpha }\left[ \frac{b-V_{2}}{\alpha +1}-\frac{b-z}{%
\alpha }\right] .
\end{equation*}

\item If $x\leq y\leq V_{1}\leq z\leq V_{2}$ or $x\leq y\leq z\leq V_{1}\leq
V_{2}$, then we have%
\begin{eqnarray*}
\dint\limits_{a}^{V_{1}}\left\vert x-t\right\vert \left( t-a\right) ^{\alpha
-1}dt &=&\frac{2\left( x-a\right) ^{\alpha +1}}{\alpha \left( \alpha
+1\right) }+\left( V_{1}-a\right) ^{\alpha }\left[ \frac{V_{1}-a}{\alpha +1}-%
\frac{x-a}{\alpha }\right] , \\
\dint\limits_{V_{1}}^{V_{2}}\left\vert y-t\right\vert \left( V_{2}-t\right)
^{\alpha -1}dt &=&\left( V_{2}-V_{1}\right) ^{\alpha }\left[ \frac{%
V_{2}-V_{1}}{\alpha +1}-\frac{V_{2}-y}{\alpha }\right] ,
\end{eqnarray*}%
and%
\begin{equation*}
\dint\limits_{V_{2}}^{b}\left\vert z-t\right\vert \left( b-t\right) ^{\alpha
-1}dt=\left( b-V_{2}\right) ^{\alpha }\left[ \frac{b-z}{\alpha }-\frac{%
b-V_{2}}{\alpha +1}\right] .
\end{equation*}%
Using the inequality (\ref{3-1a}) and the above identities $%
\int_{a}^{V_{1}}\left\vert x-t\right\vert \left( t-a\right) ^{\alpha -1}dt$, 
$\int_{V_{1}}^{V_{2}}\left\vert y-t\right\vert \left( V_{2}-t\right)
^{\alpha -1}dt$ and $\int_{V_{2}}^{b}\left\vert z-t\right\vert \left(
b-t\right) ^{\alpha -1}dt$, we derive the inequality (\ref{3-1}). This
completes the proof.
\end{enumerate}
\end{proof}

Under the assumptions of Theorem \ref{3.1}, we have the following
corollaries and remarks:

\begin{remark}
In Theorem \ref{3.1}, if we take $\alpha =1$, then then the inequality (\ref%
{3-1}) reduces the inequality (\ref{1-3}) in Theorem \ref{1.2}.
\end{remark}

\begin{corollary}
\label{3.2}In Theorem \ref{3.1}, let $\delta \in \left[ \frac{1}{2},1\right] 
$, $x=\delta a+(1-\delta )b$, $y=\frac{a+b}{2}$ and $z=(1-\delta )a+\delta b$%
. Then, we have the inequality%
\begin{equation}
\left\vert \lambda ^{\alpha }f(\delta a+(1-\delta )b)+\eta ^{\alpha }f(\frac{%
a+b}{2})+\mu ^{\alpha }f((1-\delta )a+\delta b)-\frac{\Gamma (\alpha +1)}{%
\left( b-a\right) ^{\alpha }}\left[ J_{V_{1}-}^{\alpha
}f(a)+J_{V_{1}+}^{\alpha }f(V_{2})+J_{V_{2}+}^{\alpha }f(b)\right]
\right\vert  \notag
\end{equation}%
\begin{equation*}
\leq \frac{MN\left( \alpha ,\lambda ,\eta ,\delta \right) \left( b-a\right) 
}{\alpha +1}
\end{equation*}%
where $N\left( \alpha ,\lambda ,\eta ,\delta \right) $ is defined as follows:
\end{corollary}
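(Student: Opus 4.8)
The plan is to specialize Theorem~\ref{3.1} directly. First I would substitute $x=\delta a+(1-\delta )b$, $y=\frac{a+b}{2}$, $z=(1-\delta )a+\delta b$ and note that $\delta \in \left[ \frac{1}{2},1\right] $ forces $a\leq x\leq y\leq z\leq b$, so Theorem~\ref{3.1} applies with $V_{1}=(1-\lambda )a+\lambda b$, $V_{2}=\mu a+(\lambda +\eta )b$, and (\ref{3-1}) reads
\begin{equation*}
\left\vert \lambda ^{\alpha }f(x)+\eta ^{\alpha }f(y)+\mu ^{\alpha }f(z)-\frac{\Gamma (\alpha +1)}{\left( b-a\right) ^{\alpha }}\left[ J_{V_{1}-}^{\alpha }f(a)+J_{V_{1}+}^{\alpha }f(V_{2})+J_{V_{2}+}^{\alpha }f(b)\right] \right\vert \leq \frac{\alpha M\,V_{\alpha ,\lambda ,\eta ,\mu }(x,y,z)}{\left( b-a\right) ^{\alpha }}.
\end{equation*}
Hence it suffices to show that $\alpha V_{\alpha ,\lambda ,\eta ,\mu }(x,y,z)/(b-a)^{\alpha }$ equals $N(\alpha ,\lambda ,\eta ,\delta )\,(b-a)/(\alpha +1)$ in each case.

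To identify the relevant case I would write every point of interest in the normalized form $a+s(b-a)$: here $s=1-\delta $ for $x$, $s=\frac{1}{2}$ for $y$, $s=\delta $ for $z$, $s=\lambda $ for $V_{1}$, and $s=\lambda +\eta =1-\mu $ for $V_{2}$. Comparing these values shows that $V_{1}\leq x$ iff $\lambda \leq 1-\delta $, that $V_{1}\leq y$ iff $\lambda \leq \frac{1}{2}$, that $V_{2}\geq y$ iff $\mu \leq \frac{1}{2}$, and that $V_{2}\leq z$ iff $\mu \geq 1-\delta $. Rewriting these via $\mu =1-\lambda -\eta $ in terms of $\lambda ,\eta ,\delta $, the four comparisons pick out exactly one of the eight configurations listed before Theorem~\ref{3.1}, and they are precisely the branch conditions entering the definition of $N$.

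In the chosen configuration I would then substitute the normalized values into the corresponding closed form for $V_{\alpha ,\lambda ,\eta ,\mu }(x,y,z)$, using $V_{1}-a=\lambda (b-a)$, $V_{2}-V_{1}=\eta (b-a)$, $b-V_{2}=\mu (b-a)$, $x-a=b-z=(1-\delta )(b-a)$ and $b-y=\frac{1}{2}(b-a)$. Each summand of those formulas is a constant times $(b-a)^{\alpha }$ or $(b-a)^{\alpha +1}$, so after multiplication by $\alpha /(b-a)^{\alpha }$ the whole expression becomes $(b-a)$ times a dimensionless quantity; factoring out $1/(\alpha +1)$ and eliminating $\mu $ yields the value of $N(\alpha ,\lambda ,\eta ,\delta )$ on that branch. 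Assembling the branches and inserting into the displayed inequality completes the proof.

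The only genuine difficulty is bookkeeping: one must correctly pair each ordering of $V_{1},V_{2}$ among $x,y,z$ with the corresponding inequalities among $\lambda ,\eta ,\delta $, check that the eight configurations together exhaust the parameter range determined by $\lambda ,\eta ,\mu \geq 0$, $\lambda +\eta +\mu =1$ and $\delta \in [\frac{1}{2},1]$, and then carry out the routine but somewhat long simplification in each branch, paying attention to the signs in the bracketed terms so that the coefficients of $N$ emerge correctly.
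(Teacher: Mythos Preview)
Your proposal is correct and is exactly the approach the paper takes: the corollary has no separate proof in the paper because it is obtained by substituting $x=\delta a+(1-\delta )b$, $y=\frac{a+b}{2}$, $z=(1-\delta )a+\delta b$ into Theorem~\ref{3.1} and then rewriting each of the eight configurations of $V_{1},V_{2}$ among $x,y,z$ as inequalities in $\lambda ,\eta ,\delta $, with the claimed $N(\alpha ,\lambda ,\eta ,\delta )$ arising from the stated $V_{\alpha ,\lambda ,\eta ,\mu }$ after the normalizations you describe. One small remark: four comparisons are not quite enough to distinguish all ten orderings (cases~1 and~8 each cover two), so you will also need the comparisons $V_{2}\leq x\Leftrightarrow \lambda +\eta \leq 1-\delta $ and $V_{1}\geq z\Leftrightarrow \lambda \geq \delta $, but this is purely bookkeeping and does not affect your argument.
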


\begin{enumerate}
\item If $\lambda +\eta \leq 1-\delta $ or $\lambda \leq 1-\delta \leq
\lambda +\eta \leq \frac{1}{2}$, then%
\begin{eqnarray*}
N\left( \alpha ,\lambda ,\eta ,\delta \right) &=&\lambda ^{\alpha }\left[
\left( 1-\delta \right) \left( \alpha +1\right) -\alpha \lambda \right]
+\eta ^{\alpha }\left[ \left( \frac{1}{2}-\lambda -\eta \right) \left(
\alpha +1\right) +\alpha \eta \right] \\
&&+2\left( 1-\delta \right) ^{\alpha +1}+\left( 1-\lambda -\eta \right)
^{\alpha }\left[ \alpha \left( 1-\lambda -\eta \right) -\left( \alpha
+1\right) \left( 1-\delta \right) \right] .
\end{eqnarray*}

\item If $\lambda \leq 1-\delta \leq \frac{1}{2}\leq \lambda +\eta \leq
\delta $, then 
\begin{eqnarray*}
N\left( \alpha ,\lambda ,\eta ,\delta \right) &=&\lambda ^{\alpha }\left[
\left( 1-\delta \right) \left( \alpha +1\right) -\alpha \lambda \right]
+2\left( \lambda +\eta -\frac{1}{2}\right) ^{\alpha +1}+\eta ^{\alpha }\left[
\alpha \eta -\left( \alpha +1\right) \left( \lambda +\eta -\frac{1}{2}%
\right) \right] \\
&&+2\left( 1-\delta \right) ^{\alpha +1}+\left( 1-\lambda -\eta \right)
^{\alpha }\left[ \alpha \left( 1-\lambda -\eta \right) -\left( \alpha
+1\right) \left( 1-\delta \right) \right] .
\end{eqnarray*}

\item If $\lambda \leq 1-\delta \leq \frac{1}{2}\leq \delta \leq \lambda
+\eta $, then 
\begin{eqnarray*}
N\left( \alpha ,\lambda ,\eta ,\delta \right) &=&\lambda ^{\alpha }\left[
\left( 1-\delta \right) \left( \alpha +1\right) -\alpha \lambda \right]
+2\left( \lambda +\eta -\frac{1}{2}\right) ^{\alpha +1} \\
&&+\eta ^{\alpha }\left[ \alpha \eta -\left( \alpha +1\right) \left( \lambda
+\eta -\frac{1}{2}\right) \right] +\left( 1-\lambda -\eta \right) ^{\alpha }%
\left[ \left( \alpha +1\right) \left( 1-\delta \right) -\alpha \left(
1-\lambda -\eta \right) \right] .
\end{eqnarray*}

\item If $1-\delta \leq \lambda \leq \lambda +\eta \leq \frac{1}{2}$, then 
\begin{eqnarray*}
N\left( \alpha ,\lambda ,\eta ,\delta \right) &=&4\left( 1-\delta \right)
^{\alpha +1}+\lambda ^{\alpha }\left[ \alpha \lambda -\left( 1-\delta
\right) \left( \alpha +1\right) \right] +\eta ^{\alpha }\left[ \alpha \eta
+\left( \alpha +1\right) \left( \frac{1}{2}-\lambda -\eta \right) \right] \\
&&+\left( 1-\lambda -\eta \right) ^{\alpha }\left[ \alpha \left( 1-\lambda
-\eta \right) -\left( \alpha +1\right) \left( 1-\delta \right) \right] .
\end{eqnarray*}

\item If $1-\delta \leq \lambda \leq \frac{1}{2}\leq \lambda +\eta \leq
\delta $, then%
\begin{eqnarray*}
N\left( \alpha ,\lambda ,\eta ,\delta \right) &=&4\left( 1-\delta \right)
^{\alpha +1}+\lambda ^{\alpha }\left[ \alpha \lambda -\left( 1-\delta
\right) \left( \alpha +1\right) \right] +2\left( \lambda +\eta -\frac{1}{2}%
\right) ^{\alpha +1} \\
&&+\eta ^{\alpha }\left[ \alpha \eta -\left( \alpha +1\right) \left( \lambda
+\eta -\frac{1}{2}\right) \right] +\left( 1-\lambda -\eta \right) ^{\alpha }%
\left[ \alpha \left( 1-\lambda -\eta \right) -\left( \alpha +1\right) \left(
1-\delta \right) \right] .
\end{eqnarray*}

\item If $1-\delta \leq \lambda \leq \frac{1}{2}\leq \delta \leq \lambda
+\eta $, then%
\begin{eqnarray*}
N\left( \alpha ,\lambda ,\eta ,\delta \right) &=&2\left( 1-\delta \right)
^{\alpha +1}+\lambda ^{\alpha }\left[ \alpha \lambda -\left( 1-\delta
\right) \left( \alpha +1\right) \right] +2\left( \lambda +\eta -\frac{1}{2}%
\right) ^{\alpha +1} \\
&&+\eta ^{\alpha }\left[ \alpha \eta -\left( \alpha +1\right) \left( \lambda
+\eta -\frac{1}{2}\right) \right] +\left( 1-\lambda -\eta \right) ^{\alpha }%
\left[ \left( \alpha +1\right) \left( 1-\delta \right) -\alpha \left(
1-\lambda -\eta \right) \right] .
\end{eqnarray*}

\item If $\frac{1}{2}\leq \lambda \leq \lambda +\eta \leq \delta $, then%
\begin{eqnarray*}
N\left( \alpha ,\lambda ,\eta ,\delta \right) &=&4\left( 1-\delta \right)
^{\alpha +1}+\lambda ^{\alpha }\left[ \alpha \lambda -\left( 1-\delta
\right) \left( \alpha +1\right) \right] +\eta ^{\alpha }\left[ \alpha \eta
-\left( \alpha +1\right) \left( \lambda +\eta -\frac{1}{2}\right) \right] \\
&&+\left( 1-\lambda -\eta \right) ^{\alpha }\left[ \alpha \left( 1-\lambda
-\eta \right) -\left( \alpha +1\right) \left( 1-\delta \right) \right] .
\end{eqnarray*}

\item If $\frac{1}{2}\leq \lambda \leq \delta \leq \lambda +\eta $ or $%
\delta \leq \lambda $, then%
\begin{eqnarray*}
N\left( \alpha ,\lambda ,\eta ,\delta \right) &=&2\left( 1-\delta \right)
^{\alpha +1}+\lambda ^{\alpha }\left[ \alpha \lambda -\left( 1-\delta
\right) \left( \alpha +1\right) \right] \\
&&+\eta ^{\alpha }\left[ \alpha \eta -\left( \alpha +1\right) \left( \lambda
+\eta -\frac{1}{2}\right) \right] +\left( 1-\lambda -\eta \right) ^{\alpha }%
\left[ \left( \alpha +1\right) \left( 1-\delta \right) -\alpha \left(
1-\lambda -\eta \right) \right] .
\end{eqnarray*}
\end{enumerate}

\begin{corollary}
In Corollary \ref{3.2}, if we take $\delta =1$, $\lambda =\mu =\frac{\theta 
}{2}$ and $\eta =1-\theta $ with $\theta \in \left[ 0,1\right] $, then we
have the following weighted Bullen-type inequality for $M$-Lipschitzian
functions via fractional integrals%
\begin{equation}
\left\vert \left( \frac{\theta }{2}\right) ^{\alpha }\left(
f(a)+f((b)\right) +\left( 1-\theta \right) ^{\alpha }f(\frac{a+b}{2})+-\frac{%
\Gamma (\alpha +1)}{\left( b-a\right) ^{\alpha }}\left[ J_{V_{1}-}^{\alpha
}f(a)+J_{V_{1}+}^{\alpha }f(V_{2})+J_{V_{2}+}^{\alpha }f(b)\right]
\right\vert  \notag
\end{equation}%
\begin{equation}
\leq \frac{M\left[ 2\alpha \left( \frac{\theta }{2}\right) ^{\alpha
+1}+\left( 1-\theta \right) ^{\alpha +1}\frac{\alpha -1}{2}+2\left( \frac{%
1-\theta }{2}\right) ^{\alpha +1}\right] \left( b-a\right) }{\alpha +1}.
\label{3-2a}
\end{equation}
\end{corollary}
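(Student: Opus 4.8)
The plan is to obtain this inequality as a one-step specialization of Corollary~\ref{3.2}. First I would check admissibility of the parameter choice: with $\delta=1$ we have $\delta\in[\frac12,1]$, and with $\lambda=\mu=\frac{\theta}{2}$, $\eta=1-\theta$, $\theta\in[0,1]$ we get $\lambda,\eta,\mu\in[0,1]$ together with $\lambda+\eta+\mu=\frac{\theta}{2}+(1-\theta)+\frac{\theta}{2}=1$, so the hypotheses of Corollary~\ref{3.2} (hence of Theorem~\ref{3.1}) hold. Since $\delta=1$, the nodes degenerate to $x=\delta a+(1-\delta)b=a$, $y=\frac{a+b}{2}$, $z=(1-\delta)a+\delta b=b$; then $\lambda^{\alpha}f(x)$ and $\mu^{\alpha}f(z)$ become $(\frac{\theta}{2})^{\alpha}f(a)$ and $(\frac{\theta}{2})^{\alpha}f(b)$, while $\eta^{\alpha}f(y)=(1-\theta)^{\alpha}f(\frac{a+b}{2})$, so the left-hand side of the inequality in Corollary~\ref{3.2} is exactly the left-hand side of (\ref{3-2a}).

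The next step is to identify which of the eight branches defining $N(\alpha,\lambda,\eta,\delta)$ is active. With $\delta=1$ one has $1-\delta=0\le\frac{\theta}{2}=\lambda$; also $\lambda=\frac{\theta}{2}\le\frac12$ because $\theta\le1$; and $\lambda+\eta=1-\frac{\theta}{2}\in[\frac12,1]$, so $\frac12\le\lambda+\eta\le1=\delta$. Hence we fall in the fifth case, $1-\delta\le\lambda\le\frac12\le\lambda+\eta\le\delta$, for which Corollary~\ref{3.2} gives
\begin{multline*}
N(\alpha,\lambda,\eta,\delta)=4(1-\delta)^{\alpha+1}+\lambda^{\alpha}\big[\alpha\lambda-(1-\delta)(\alpha+1)\big]+2\big(\lambda+\eta-\tfrac12\big)^{\alpha+1}\\
+\eta^{\alpha}\big[\alpha\eta-(\alpha+1)(\lambda+\eta-\tfrac12)\big]+(1-\lambda-\eta)^{\alpha}\big[\alpha(1-\lambda-\eta)-(\alpha+1)(1-\delta)\big].
\end{multline*}
At the endpoints $\theta=0$ and $\theta=1$ the parameters also satisfy the defining inequalities of an adjacent branch, but a quick check shows the terms responsible for the discrepancy vanish there (they carry a factor $1-\lambda-\eta=\frac{\theta}{2}$, resp.\ $\lambda+\eta-\frac12=\frac{1-\theta}{2}$), so the value of $N$ is unambiguous.

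Finally I would substitute $\delta=1$, $\lambda=\frac{\theta}{2}$, $\eta=1-\theta$. Then $1-\delta=0$ kills the first summand and every occurrence of $(1-\delta)(\alpha+1)$, while $\lambda+\eta-\frac12=\frac{1-\theta}{2}$ and $1-\lambda-\eta=\frac{\theta}{2}$. A short computation gives $\lambda^{\alpha}\cdot\alpha\lambda=\alpha(\frac{\theta}{2})^{\alpha+1}$, $(1-\lambda-\eta)^{\alpha}\cdot\alpha(1-\lambda-\eta)=\alpha(\frac{\theta}{2})^{\alpha+1}$, $2(\lambda+\eta-\frac12)^{\alpha+1}=2(\frac{1-\theta}{2})^{\alpha+1}$, and $\eta^{\alpha}[\alpha\eta-(\alpha+1)(\lambda+\eta-\frac12)]=(1-\theta)^{\alpha}[(1-\theta)\alpha-(\alpha+1)\frac{1-\theta}{2}]=(1-\theta)^{\alpha+1}\frac{\alpha-1}{2}$. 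Summing,
\[
N(\alpha,\lambda,\eta,\delta)=2\alpha\Big(\tfrac{\theta}{2}\Big)^{\alpha+1}+(1-\theta)^{\alpha+1}\tfrac{\alpha-1}{2}+2\Big(\tfrac{1-\theta}{2}\Big)^{\alpha+1},
\]
and inserting this into the bound $\frac{M\,N(\alpha,\lambda,\eta,\delta)(b-a)}{\alpha+1}$ of Corollary~\ref{3.2} yields precisely the right-hand side of (\ref{3-2a}). The argument is entirely routine; the only point demanding care is the correct selection of the active branch among the eight (together with the harmless endpoint check above), after which everything reduces to the algebra just indicated.
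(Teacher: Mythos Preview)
Your proposal is correct and follows exactly the approach the paper intends: the paper states this corollary without proof, as an immediate specialization of Corollary~\ref{3.2}, and your argument supplies precisely the routine parameter check, branch selection (case~5), and algebraic substitution that this specialization requires. The only minor imprecision is in your endpoint remark---at $\theta=0$ the discrepancy with the adjacent branch actually involves the $\lambda^{\alpha}$ and $(1-\delta)^{\alpha+1}$ terms rather than a $1-\lambda-\eta$ factor---but since $1-\delta=0$ throughout and $\lambda=0$ at $\theta=0$, the conclusion is unaffected.
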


\begin{remark}
In the inequality (\ref{3-2a}), if we take $\theta =\frac{1}{2}$, then the
inequality (\ref{3-2a}) reduces to the following Bullen-type inequality for $%
M$-Lipschitzian functions via fractional integrals%
\begin{equation}
\left\vert \frac{1}{2}\left[ \frac{f(a)+f(b)}{2}+f\left( \frac{a+b}{2}%
\right) \right] -\frac{2^{\alpha -1}\Gamma (\alpha +1)}{\left( b-a\right)
^{\alpha }}\left[ J_{\left( \frac{3a+b}{4}\right) -}^{\alpha }f(a)+J_{\left( 
\frac{3a+b}{4}\right) +}^{\alpha }f(\frac{a+3b}{4})+J_{\left( \frac{a+3b}{4}%
\right) +}^{\alpha }f(b)\right] \right\vert  \notag
\end{equation}%
\begin{equation*}
\leq \frac{M\left( b-a\right) }{2^{\alpha +2}\left( \alpha +1\right) }\left[
\alpha +1+2^{\alpha -1}\left( \alpha -1\right) \right] .
\end{equation*}
\end{remark}

\begin{remark}
In the inequality (\ref{3-2a}), if we take $\theta =\frac{1}{3}$, then the
inequality (\ref{3-2a}) reduces to the following Simpson-type inequality for 
$M$-Lipschitzian functions via fractional integrals%
\begin{equation}
\left\vert \frac{1}{6}\left[ f(a)+4f\left( \frac{a+b}{2}\right) +f(b)\right]
-\frac{6^{\alpha -1}\Gamma (\alpha +1)}{\left( b-a\right) ^{\alpha }}\left[
J_{\left( \frac{5a+b}{6}\right) -}^{\alpha }f(a)+J_{\left( \frac{5a+b}{6}%
\right) +}^{\alpha }f(\frac{a+5b}{6})+J_{\left( \frac{a+5b}{6}\right)
+}^{\alpha }f(b)\right] \right\vert  \notag
\end{equation}%
\begin{equation*}
\leq \frac{M\left( b-a\right) }{18\left( \alpha +1\right) }\left[ \alpha
+2^{2\alpha }\left( \alpha -1\right) 3+2^{\alpha +1}\right]
\end{equation*}
\end{remark}

\end{document}